\theoremstyle{plain}
\newtheorem{theorem}{Theorem}[section]
\newtheorem{proposition}[theorem]{Proposition}
\newtheorem{lemma}[theorem]{Lemma}
\newtheorem{conj}{Conjecture}
\newtheorem*{conj1'}{Conjecture 1$'$}
\theoremstyle{definition}
\newtheorem{definition}[theorem]{Definition}
\newtheorem{ques}{Question}
\newtheorem*{ques4'}{Question 4$'$}
\theoremstyle{remark}
\def\as{\hat{\cA}}
\def\bs{\mathbf{s}}
\def\bt{\mathbf{t}}
\def\od{\overline{d}}
\def\mzvs{MZVs}
\def\zt{\hat{\zeta}}
\def\z{\zt}
\def\J#1{\ker(\zt_{#1})}
\def\hhqs{\hh^1_*}
\def\Ji{\ker(\z)}
\def\sg{\hspace*{3mm}}
\def\CR{\Phi_1} \def\DR{\psi_1}
\def\CD{\Phi_2}  \def\DD{\psi_2}
\def\CCC{\Phi} \def\DDD{\psi}
\def\bs{\mathbf{s}}
\def\p{\varphi}
\begin{document}
\title{Asymptotic relations for\linebreak weighted finite multiple zeta values}
\author{Julian Rosen}

\begin{abstract}
Multiple zeta values are real numbers defined by an infinite series generalizing values of the Riemann zeta function at positive integers. Finite truncations of this series are called multiple harmonic sums and are known to have interesting arithmetic properties. When the truncation point is one less than a prime $p$, the mod $p$ values of multiple harmonic sums are called finite multiple zeta values. The present work introduces a new class of congruence for multiple harmonic sums, which we call \emph{weighted congruences}. These congruences can hold modulo arbitrarily large powers of $p$. Unlike results for finite multiple zeta values, weighted congruences typically involve harmonic sums of multiple weights, which are multiplied by explicit powers of $p$ depending on weight. We also introduce certain formal weighted congruences inolving an infinite number of terms, which we call \emph{asymptotic relations}. We define a weighted analogue of the finite multiple zeta function, and give an algebraic framework for classifying weighted congruences and asymptotic relations.
\end{abstract}
\maketitle

%
%
\section{Introduction}
The Riemann zeta function is defined by the infinite series
\[
\zeta(s):=\sum_{n\geq 1}\frac{1}{n^s},
\]
which converges provided that $\re(s)>1$. Values of this function are particularly interesting when $s\geq 2$ is an integer.
In 1735 Euler computed
\[
\zeta(2)=\frac{\pi^2}{6}.
\]
Euler's computation actually gave all of the even zeta values in terms of Bernoulli numbers, which are rational numbers defined by a generating function:
\[
\zeta(2k)=\frac{(-1)^{k-1}2^{2k-1}B_{2k}}{(2k)!}\pi^{2k},\gap \gap\sum_{n\geq 0}B_n\frac{x^n}{n!}=\frac{x}{e^x-1}.
\]
In particular $\zeta(2k)$ is a rational multiple of $\pi^{2k}$, so lies in $\Q[\zeta(2)]$. Lindemann proved in 1882 that $\pi$ is transcendental, so the algebraic structure of $\zeta(2),\zeta(4),\ldots$ is completely determined.

Much less is known about the odd zeta values. Ap\'{e}ry showed in 1978 that $\zeta(3)$ is irrational, and in 2001 Rivoal \cite{Riv00} proved that infinitely many of the odd zeta values are irrational. Zudilin \cite{Zud01} showed in 2001 that at least one of the four numbers $\zeta(5)$, $\zeta(7)$, $\zeta(9)$, or $\zeta(11)$ is irrational.  It is conjectured that $\zeta(3), \zeta(5), \ldots$ are algebraically independent over $\Q(\pi)$, but at present this conjecture is inaccessible.

%
%
\subsection{Multiple zeta values}
For $k\geq 1$, the \emph{multiple zeta function} of depth $k$ is given by
\[
\zeta(s_1,\ldots,s_k):=\sum_{n_1>\ldots>n_k\geq 1}\frac{1}{n_1^{s_1}\ldots n_k^{s_k}}\in\R.
\]
For $k=1$ this is just the Riemann zeta function.
Values of the multiple zeta function for $s_1,\ldots,s_k$ integers are called \emph{multiple zeta values} (\emph{\mzvs}) or \emph{Euler-Zagier sums}. To ensure convergence, we insist $s_1\geq 2$ and $s_2,\ldots,s_k\geq 1$.

\mzvs{} satisfy many algebraic relations . For example, $\zeta(2,1)=\zeta(3)$, a fact that was known to Euler.
Another family of examples is given by the sum formula, which says that for all positive integers $k<n$,
\[
\sum_{\substack{s_1+\ldots+s_k=n\\s_1\geq 2,s_2,\ldots,s_k\geq 1}}\zeta(s_1,\ldots,s_k)=\zeta(n).
\]
This relation was conjectured by Moen (see \cite{Hof92}) and proved independently by Zagier and Granville \cite{Gra97}.
\begin{ques}
\label{quesmzv}
Determine the set of relations satisfied by \mzvs.
\end{ques}
It can be shown by expanding out a product of nested sums that the product of two \mzvs{} is a linear combination (with integer coefficients) of \mzvs.
This is called a quasi-shuffle relation, and it shows that to answer Question \ref{quesmzv} it suffices to consider \emph{linear} relations.  Also, both of the relations given above are homogeneous (that is, the value of $s_1+\ldots+s_k$ is constant among all terms $\zeta(s_1,\ldots,s_k)$ appearing in each relation). It is conjectured that the space spanned by \mzvs{} is graded by weight, mening that every relation among \mzvs{} can be decomposed into homogeneous relations.
A general method for generating relations among \mzvs{} is given by the extended double shuffle relations, described briefly in Sec.\ \ref{ssrel}. It is conjectured that every relation is a consequence of the extended double shuffle relations (see \cite{Iha06}, Conjecture 1).


%
%
\subsection{Multiple harmonic sums}
\label{subsecMHS}
Truncations of the multiple zeta value series are called multiple harmonic sums, and they have interesting arithmetic properties.
Recall that a \emph{composition} is a finite ordered list $\bs=(s_1,\ldots,s_k)$ of positive integers. We define the \emph{weight} and \emph{depth} (or \emph{length}) of a composition $\bs$ by $w(\bs)=s_1+\ldots+s_k$ and $\ell(\bs)=k$, respectively. We allow the empty composition $\varnothing$.
\begin{definition}
Let $\bs=(s_1,\ldots,s_k)$ be a composition, $n$ a positive integer. We define the \emph{multiple harmonic sum}
\[
H_n(\bs)=H_n(s_1,\ldots,s_k):=\sum_{n\geq n_1>\ldots>n_k\geq 1}\frac{1}{n_1^{s_1}\ldots n_k^{s_k}}\in\Q.
\]
By convention we take $H_n(\varnothing)=1$.
\end{definition}
Multiple harmonic sums are known to satisfy many congruences when $n=p-1$, $p$ a prime. For example, Wolstenholme's Theorem is the old result that the congruence\footnote{For $r$ a rational number and $p$ a prime, we write $r\equiv 0\mod p^n$ to mean $p^n$ divides the numerator of $r$ (i.e., $r\equiv 0\mod p^n$ means $r\in p^n\Z_{(p)}$)} 
\begin{equation}
\label{eqwolst}
\H(1)\equiv0\mod p^2
\end{equation}
 holds for $p\geq 5$. As in this example, it is typical for the congruences we consider to fail for a finite number of small primes.

Systematic study of the mod $p$ structure of multiple harmonic sums was undertaken in independent works of Hoffman \cite{Hof04a} and Zhao \cite{Zha08}. The current formulation of the problem, due to Zagier, uses the ring
\begin{equation*}\label{eqcA}
\cA:=\frac{\prod_p \Z/(p)}{\bigoplus_p \Z/(p)},
\end{equation*}
which is an algebra over $\Q$. An element of $\cA$ consists of a family of residue classes $a_p\in\Z/(p)$ defined for all but finitely many $p$, where two families $(a_p)$ and $(b_p)$ are identified if and only if $\{p:a_p\neq b_p\}$ is a finite set.
\begin{definition}
The \emph{finite multiple zeta value} (or \emph{finite MZV}) is given by
\[
\zeta_\cA(s_1,\ldots,s_k):=\left[\H(s_1,\ldots,s_k)+p\Z\right]\in\cA.
\]
\end{definition}
Mod $p$ congruences for multiple harmonic sums $\H(\bs)$ holding for all sufficiently large $p$ correspond to relations satisfied by the finite \mzvs. Wolstenholme's congruence \eqref{eqwolst}, for instance, implies the vanishing of $\zeta_{\cA}(1)$.

\begin{ques}
\label{quesMHS}
Determine the set of relations satisfied by the finite \mzvs.
\end{ques}
\noindent As with the ordinary \mzvs, it suffices to consider linear relations. There is also a conjecture that the space of finite \mzvs{} is graded by weight.

%
%
\subsection{Weighted congruences and asymptotic relations}
The present work introduces two new, related classes of identities for multiple harmonic sums.

\subsubsection{Weighted congruences} Wolstenholme's congruence \eqref{eqwolst} can be extended. For all $p\geq 5$ we have the congruence\footnote{This can be shown using known expressions for $\H(1)$ and $\H(2,1)$ in terms of Bernoulli numbers; the specific form of the expressions can be found in \cite{Zha08}}
\begin{equation}
\label{eqexs}
\H(1)+\frac{1}{3}p^2\H(2,1)\equiv 0\mod p^4,
\end{equation}
from which \eqref{eqwolst} follows by reducing modulo $p^2$. We will extend this congruence further, adding an additional multiple harmonic sum to obtain
\begin{equation}
\label{eqex}
\H(1)+\frac{1}{3}p^2\H(2,1)-\frac{1}{6}p^4\H(4,1)\equiv 0\mod p^5,
\end{equation}
which holds for all $p\geq 5$. This is proven in Sec.\ \ref{sswolst}. This congruence differs from results for finite \mzvs{} in that each multiple harmonic sum is multiplied by an explicit power of $p$ (we call this a \emph{weighting}). The following definition is new:

\begin{definition}
A \emph{weighted congruence for multiple harmonic sums} (or simply a \emph{weighted congruence}) is a congruence of the form
\begin{equation}
\label{eqmain1}
\sum_{w(\bs)<n}\alpha_{\bs}\,p^{w(\bs)}\H(\bs)\equiv0\mod p^n
\end{equation}
which holds for all sufficiently large $p$, where $n$ is a positive integer, the sum is over compositions $\bs$ of weight less than $n$, and the coefficients $\alpha_{\bs}\in\Q$ do not depend on $p$.
\end{definition}

Any congruence that can be put in the form \eqref{eqmain1} through multiplication by a power of $p$ will also be called weighted. For instance, the congruence \eqref{eqex} is weighted because multiplication by $p$ puts it in the form \eqref{eqmain1} for $n=6$, with $\alpha_{(1)}=1$, $\alpha_{(2,1)}=\frac{1}{3}$, $\alpha_{(4,1)}=-\frac{1}{6}$, and all other $\alpha_{\bs}$ zero.
We pose the following variation of Question \ref{quesMHS}:
\begin{ques}
\label{quesMHS'}
Determine the set of weighted congruences.
\end{ques}
The weighted congruences include all \emph{homogeneous} relations among the finite multiple zeta values, i.e.\ all congruences of the form
\[
\sum_{w(\bs)=n}\alpha_{\bs} \,\H(\bs)\equiv0\mod p\gap\text{ for $p$ large}.
\]
Therefore if we assume the conjecture that the space of finite \mzvs{} is graded by weight, a solution to Question \ref{quesMHS'} would give a solution to Question \ref{quesMHS}. Unlike what is conjectured for finite \mzvs, weighted congruences can be essentially non-homogeneous (as \eqref{eqex} illustrates).

%
%
\subsubsection{Asymptotic relations}
One often finds that a weighted congruence can be extended to stronger one, as we saw above with \eqref{eqwolst} extended to \eqref{eqexs}, and then to \eqref{eqex}. In this example and in many others, extensions can be continued indefinitely. The following definition is new:
\begin{definition}
An \emph{asymptotic relation for weighted multiple harmonic sums} (or simply an \emph{asymptotic relation}) is a formal infinite sum
\begin{equation}
\label{eqar}
\sum_{\bs}\alpha_{\bs}\,p^{w(\bs)}\H(\bs),
\end{equation}
with coefficients $\alpha_{\bs}\in\Q$ not depending on $p$, such that for every positive integer $n$, the weighted congruence
\begin{equation}
\label{eqtrunc}
\sum_{w(\bs)<n}\alpha_{\bs}\,p^{w(\bs)}\H(\bs)\equiv 0\mod p^n
\end{equation}
holds for all $p$ sufficiently large. We say the the asymptotic relation \eqref{eqar} is an \emph{asymptotic extension} of each of the weighted congruences \eqref{eqtrunc}.
\end{definition}

It turns out that asymptotic relations are fairly common. In the present work we derive two infinite families of asymptotic relations. We pose the following question:

\begin{ques}
\label{quesar}
Determine the set of asymptotic relations.
\end{ques}
\noindent We believe that every weighted congruence admits an asymptotic extension (see Conjecture \ref{conext}), in which case solving Question \ref{quesar} would give a solution to Question \ref{quesMHS'}.

%
%
\subsection{Results}
The present work establishes an algebraic framework for investigating weighted congruences and asymptotic relations. We use two topological rings: a completion of the ring of quasi-symmetric functions, and a projective limit of rings related to the ring $\cA$ used to study finite \mzvs{}.
We discuss a known framework for \mzvs{} in Sec.\ \ref{genmzv}, and we describe our modified framework in Sec.\ \ref{secMHS}.

In Sec.\ \ref{secATs} we prove two results. The first result, stated as Theorem \ref{eART} and in another form as Theorem \ref{ecART}, gives an asymptotic extension of the known congruence
\[
\H(s_1,\ldots,s_k)\equiv(-1)^{s_1+\ldots+s_k}\H(s_k,\ldots,s_1)\mod p,
\]
which holds for all compositions $(s_1,\ldots,s_k)$ and all $p$.
The second result, Theorem \ref{ecADT}, gives asymptotic extensions of a duality result for finite \mzvs{}, proven by Hoffman \cite{Hof04a} and extended by Zhao \cite{Zha08}.
\smallskip


In Sec.\ \ref{seccalc} we use the results of Sec.\ \ref{secATs} to derive two congruences holding modulo high powers of $p$. We provide a script automating our method of computation for the computer algebra system Mathematica.

In Sec.\ \ref{secim} we consider various quantities appearing in arithmetic that can be expressed in terms of weighted multiple harmonic sums. These quantities include values of the $p$-adic zeta function at positive integers.

\subsection{Recent related work}
Several recent results establish homogeneous relations for finite \mzvs. Pilehrood, Pilehrood, and Tauraso \cite{Pil12} showed (among other interesting results) that for all positive integers $a$, $b$, the congruence
\[
\H(\{2\}^a,3,\{2\}^b)\equiv \frac{(-1)^{a+b}(a-b)}{(a+1)(b+1)}{2a+2b+2\choose 2a+1}B_{p-2a-2b-3}\mod p
\]
holds for $p> 2a+2b+3$. 
There are known expressions relating Bernoulli numbers to depth 2 finite \mzvs{} (see \cite{Zha08}). Although it requires some choices, we can use these expressions to tranform the congruence above into the relation
\[
\zeta_{\cA}(\{2\}^a,3,\{2\}^b)+(-1)^{a+b}\frac{2(a-b)}{b+1}\zeta_{\cA}(2a+2,2b+1)=0.
\]
Linebarger and Zhao \cite{Lin13} recently generalized some of the results of \cite{Pil12}.

Recent work of Saito and Wakabayashi concerns finite \mzvs. In \cite{Sai13a} a conjecture of Kaneko is resolved: for all positive integers $k<w$, the congruence
\[
\sum_{\substack{s_1\geq 2,s_2,\ldots,s_k\geq 1\\s_1+\ldots+s_k=w}}\H(s_1,\ldots,s_k)\equiv\lp1+(-1)^k {w-1\choose k-1}\rp\frac{B_{p-w}}{w}\mod p
\]
holds for $p$ sufficiently large. Again using depth 2 finite \mzvs, this can be put in the pleasant form
\[
\sum_{\substack{s_1\geq 2,s_2,\ldots,s_k\geq 1\\s_1+\ldots+s_k=w}}\zeta_{\cA}(s_1,\ldots,s_k)=\frac{k}{w}\zeta_{\cA}(k,w-k)-\frac{1}{w}\zeta_{\cA}(1,w-1).
\]
See also \cite{Sai13} for more results on finite \mzvs.

%
%
\section{Multiple zeta values and Hoffman's algebra}
\label{genmzv}

A \emph{quasi-symmetric} function over $\Q$ is a formal power series of bounded degree in the variables $x_1,x_2,\ldots$, having coefficients in $\Q$, such that the coefficient of $x_{i_1}^{s_1}\ldots x_{i_k}^{s_k}$ is the same as the coefficient of $x_{j_1}^{s_1}\ldots x_{j_k}^{s_k}$ whenever $i_1<\ldots<i_k$ and $j_1<\ldots<j_k$. The set of quasi-symmetric functions is a commutative ring, denoted $\qs$, and it is spanned by the monomial quasi-symmetric functions
\[
M_{\bs}:=\sum_{i_1<\ldots<i_k}x_{i_1}^{s_1}\ldots x_{i_k}^{s_k},
\]
$\bs=(s_1,\ldots,s_k)$ a composition. Our results are the easiest to express using notation introduced by Hoffman \cite{Hof97} to study multiple zeta values. 


\subsection{Hoffman's algebra}
\label{subsecalg}
We consider polynomial algebra
\[
\h:=\Q\langle x,y\rangle
\]
in non-commuting variables $x$ and $y$, and the two subalgebras 
\[
\h^1:=\Q+\h y,\gap \h^0:=\Q+x\h y.
\]
A basis of $\h^1$ consists of words in the non-commuting symbols $z_1,z_2,\ldots$, where $z_n=x^{n-1}y$.  A basis for $\h^0$ consists of those words $z_{s_1}\ldots z_{s_k}$ with $s_1\geq 2$. There is a $\Q$-linear map $\zeta:\h^0\to\R$, taking $z_{s_1}\ldots z_{s_k}\in\h^0$ to the multiple zeta value $\zeta(s_1,\ldots,s_k)\in\R$. The kernel of $\zeta$ is then identified with the space of relations among \mzvs{}.

The non-commutative ring $\h^1$ can be given a commutative multiplication $*$, called the \emph{harmonic} (or \emph{stuffle}) product, which is defined recursively. We set
\[
1*\alpha=\alpha* 1=\alpha
\]
for every $\alpha\in\h^1$. For $k_1,k_2\geq 1$ integers and $\alpha_1,\alpha_2\in\h^1$, we set
\[
(z_{k_1}\alpha_1)*(z_{k_2}\alpha_2)=z_{k_1}(\alpha_1*(z_{k_2}\alpha_2))+z_{k_2}((z_{k_1}\alpha_1)* \alpha_2)+z_{k_1+k_2}(\alpha_1*\alpha_2).
\]
This restricts to give a commutative multiplication on $\h^0$. The harmonic product is constructed to reflect multiplication of nested sums over integers, and we have
\begin{equation*}
\label{estuffhom}
\zeta(\alpha_1*\alpha_2)=\zeta(\alpha_1)\zeta(\alpha_2)
\end{equation*}
for all $\alpha_1,\alpha_2\in\h^0$. We write $\h^1_*$ (resp.\ $\h^0_*$) for the set $\h^1$ (resp.\ $\h^0$) viewed as a commutative ring with multiplication given by $*$, so that $\zeta:\h^0_*\to\R$ is a ring homomorphism.

There is an isomorphism of commutative rings $\h^1_*\cong\qs$ taking $z_{\bs}\in\h^1$ to the monomial quasi-symmetric funtion $M_{\bs}\in\qs$.
The advantage of using the notation of non-commutative polynomials in $x$ and $y$ is that the multiple zeta value $\zeta(\bs)$ can be expressed as an iterated integral, and the specific form of the integral can be read off of the expansion of $z_{\bs}$ as a product of factors $x$ and $y$. This notation will also be convenient for us in the study of asymptotic relations, though for different reasons.

\subsection{Relations among multiple zeta values}
\label{ssrel}
Question \ref{quesmzv} asks for a description of the kernel of $\zeta:\h^0\to\R$. There are many known methods to produce elements of $\ker(\zeta)$. We mention three of them here.

\begin{wenumerate}
\item Let $\tau:\h\to\h$ be the concatenation-reversing automorphism which interchanges $x$ and $y$. Then $\tau$ restricts to an automorphism of $\h^0$. The Duality Theorem for \mzvs, conjectured by Hoffman \cite{Hof92} and a consequence of the iterated integral representation, states that $\zeta(\alpha)=\zeta(\tau(\alpha))$ for all $\alpha\in\h^0$, i.e., $\tau(\alpha)-\alpha\in\ker(\zeta)$.

\item Let $D:\h\to\h$ be the derivation (for the concatenation product) given on generators by $D(x)=0$, $D(y)=xy$. By restriction $D$ gives a derivation of the subalgebra $\h^0$. Set $\overline{D}:=\tau D\tau$ (where $\tau$ is the map defined above). Hoffman (\cite{Hof92} Theorem 5.1, \cite{Hof97} Theorem 6.3) showed that 
$\overline{D}(\alpha)-D(\alpha)\in\ker(\zeta)$
for all $\alpha\in\h^0$. This was generalized by Ohno \cite{Ohn99} and further generalized by Ihara, Kaneko, and Zagier \cite{Iha06}.

\item  Expanding a product of iterated integrals gives a second way to express a product of \mzvs{} as a linear combination of \mzvs{}, which is in general distinct from the harmonic product. Equality of the two product representations gives the \emph{double shuffle relations} (see \cite{Iha06} for a discussion). This can be generalized to allow $\alpha_1,\alpha_2\in\h^1$ using a renormalization, leading to the \emph{extended double shuffle relations}, which conjecturally generate $\ker(\zeta)$.
\end{wenumerate}



%
%
\section{Algebraic framework for classifying\\ weighted congruences and asymptotic relations}
\label{secMHS}
In this section we describe a new algebraic framework for studying weighted congruences and asymptotic relations.
\subsection{Completion of $\h^1$}
 As in the case of \mzvs{}, we consider the non-commutative algebra $\h$ with subalgebra $\h^1$.
The harmonic product $*$ gives a commutative multiplication on $\h^1$.
We set $z_n:=x^{n-1}y$ (which has degree $\deg(z_n)=n$), so that a basis of $\h^1$ consists of the elements
\[
z_{\bs}:=z_{s_1}\ldots z_{s_k},
\]
where $\bs=(s_1,\ldots,s_k)$ is a composition. Our framework uses a completion of $\h^1$.
\begin{definition}
Let $\hh^1$ denote the completion of $\h^1$ with respect the grading by degree. We may view an element of $\hh^1$ as a formal infinite sum
\begin{equation*}
\label{edefalpha}
\alpha=\sum_{\bs}\alpha_{\bs}\,z_{\bs}
\end{equation*}
in the non-commuting variables $z_1,z_2,\ldots$, where the summation is taken over all compositions $\bs=(s_1,\ldots,s_k)$, and the coefficients $\alpha_{\bs}$ are in $\Q$. The elements $z_{\bs}$ span a desnse subspace of $\hh^1$.

For $n\geq 0$, we let $\I_n\subset\hh^1$ be the set of elements of degree $n$ and larger:
\[
\I_n:=\left\{\sum_{\bs}\alpha_{\bs}\,z_{\bs}\in\hh^1:\alpha_{\bs}=0\text{ whenever }w(\bs)<n\right\}.
\]
These sets are a neighborhood basis of 0 for the topology on $\hh^1$.
The harmonic product $*$ gives a commutative multiplication on $\hh^1$ which is continuous. We write $\hhqs$ for the set $\hh^1$, considered as a commutative topological ring with multiplication given by $*$.
\end{definition}

The ring $\hhqs$ is isomorphic to the completion of the ring of quasi-symmetric functions over $\Q$ with respect to the grading by degree (the isomorphism sends $z_{\bs}$ to $M_{\bs}$).
The open sets $\I_n$ are ideals of $\hhqs$.

%
%
\subsection{The ring of asymptotic numbers}
We next define a ring $\as$ that is a kind of $p$-adic analogue of the ring $\cA$ used in the study of finite \mzvs. In studying mod $p^n$ congruences, one is led to consider
\[
\cA_n:=\frac{\prod_p \Z/(p^n)}{\bigoplus_p\Z/(p^n)},\gap n\geq 1.
\]
For each $n\geq 2$ there is a canonical surjection $\varphi_n:\cA_n\to\cA_{n-1}$ coming from reduction modulo $p^{n-1}$. The following definition is new.
\begin{definition}
We define $\as$ be the projective limit of the system of rings $\{\cA_n\}$. An element of $\as$ mau be viewed as an element of the product 
\[
(r_n)\in\prod_{n=1}^\infty \cA_n
\]
such that $\varphi_n(r_n)=r_{n-1}$ for all $n\geq 2$.  There is a canonical projection map $\pi_n:\as\to\cA_n$, sending $(r_n)\in\as$ to $r_n\in\cA_n$. We put the discrete topology on each $\cA_n$ and the projective limit topology on $\as$: the sets $\pi_n^{-1}(0)\subset\as$ form a neighborhood basis of 0.
\end{definition}

As a projective limit of discrete rings, $\as$ is complete. It is not locally compact, however, because each basic open subgroup $\pi^{-1}_n(0)$ contains the open subgroup $\pi_{n+1}^{-1}(0)$ of infinite index.

Given an element $a_p\in\Z_p$ for all but finitely many primes $p$ (here $\Z_p$ is the ring of $p$-adic integers), we get a corresponding element $r_n:=(a_p+p^n\Z)\in \cA_n$ for each $n$. These elements satisfy $\varphi_n(r_n)=r_{n-1}$ for $n\geq 2$, so they determine an element $\as$, which we denote $[a_p]$. It can be checked that every element of $\as$ arises this way, and that $[a_p]=[b_p]$ if and only if $v_p(a_p-b_p)\to\infty$ as $p\to\infty$.


%
%
\subsection{The weighted finite multiple zeta function} 
Next we construct a weighted analogue of the finite multiple zeta function. The following definition is new:
\begin{definition}
For each $n\geq 1$, define a ring homomorphism
\[
\zt_{n}:\hhqs\to\cA_n,
\]
\[
\sum_{\bs}\alpha_{\bs}\,z_{\bs}\mapsto\left[\sum_{w(\bs)<n}\alpha_{\bs}\,p^{w(\bs)}\H(\bs)+p^n\Z\right].
\]
The maps $\zt_{n}$ are compatible with the surjections $\varphi_n:\cA_{n}\to\cA_{n-1}$, so we define
\[
\z:\hhqs \to\as
\]
to be the unique map such that $\pi_n\circ\z=\zt_n$ for all $n\geq 1$. Then for all compositions $\bs$
\[
\z(\bs):=\z(z_{\bs})=\big[p^{w(\bs)}\H(\bs)\big]\in\as
\]
is called a \emph{weighted finite multiple zeta value} (or \emph{weighted finite MZV}). We call $\z$ the \emph{weighted finite multiple zeta function}.
\end{definition}
The map $\zt_n$ is continuous because its kernel contains the open ideal $\I_n\subset\hhqs$. This implies $\z$ is also continuous.
By construction we have
\begin{align*}
\sum_{\bs}\alpha_{\bs} \,z_{\bs}\in\J{n} \gap\Longleftrightarrow&\gap \sum_{w(\bs)<n}\alpha_{\bs} \,p^{w(\bs)}\H(\bs)\equiv 0\mod p^n\text{ for large $p$},\\
\sum_{\bs}\alpha_{\bs} \,z_{\bs}\in\Ji \gap\Longleftrightarrow&\gap \sum_{w(\bs)<n}\alpha_{\bs} \,p^{w(\bs)}\H(\bs)\text{ is an asymptotic relation}.
\end{align*}
Our study of weighted congruences and asymptotic relations amounts to the study of the closed ideals $\J{n}$ and $\Ji$. We may view $\ker(\zt_n)/\I_n$ as the space of weighted congruences holding modulo $p^n$, and $\Ji$ as the space of asymptotic relations. An asymptotic extension of $\alpha\in\J{n}$ is then an element $\alpha'\in\Ji$ such that $\alpha'\equiv\alpha\mod\I_n$.
We make the following conjecture, which is equivalent to the statement that every weighted congruence admits an asymptotic extension.

\begin{conj}
\label{conext}
We have an equality of ideals
\[
\J{n}=\Ji+\I_n.
\]
\end{conj}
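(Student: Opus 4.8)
The plan is as follows. The inclusion $\Ji+\I_n\subseteq\J{n}$ is immediate: $\pi_n\circ\z=\zt_n$ forces $\Ji\subseteq\ker(\zt_n)$, while $\I_n\subseteq\ker(\zt_n)$ since the sum defining $\zt_n$ involves only compositions $\bs$ with $w(\bs)<n$. The content is the reverse inclusion: I must show that every $\alpha\in\ker(\zt_n)$ differs by an element of $\I_n$ from some element of $\bigcap_{m\geq 1}\ker(\zt_m)=\Ji$ (which equals $\Ji$ because $\z$ is induced by the $\zt_m$, so $\z(\alpha)=0$ iff $\zt_m(\alpha)=0$ for every $m$). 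I would prove this by successive approximation up the tower $\cdots\to\cA_{m+1}\xrightarrow{\varphi_{m+1}}\cA_m\to\cdots$: setting $\beta_n:=\alpha$, construct inductively $\beta_m\in\ker(\zt_m)$ for $m\geq n$ with $\beta_{m+1}-\beta_m\in\I_m$. Granting the construction, $(\beta_m)$ is Cauchy, hence---as $\hhqs$ is complete and the $\I_m$ form a neighborhood basis of $0$---converges to some $\beta\in\hhqs$ with $\beta-\beta_m\in\I_m$ for all $m$; since each $\zt_m$ is continuous and annihilates $\I_m$, we get $\zt_m(\beta)=\zt_m(\beta_m)=0$ for every $m$, so $\beta\in\Ji$, while $\beta\equiv\beta_n=\alpha\bmod\I_n$. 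This gives $\alpha\in\Ji+\I_n$.

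Everything thus reduces to the inductive step. Given $\beta_m\in\ker(\zt_m)$, I need $\gamma\in\I_m$ with $\beta_m+\gamma\in\ker(\zt_{m+1})$; since $\zt_m$ annihilates $\I_m$, this automatically keeps $\beta_m+\gamma$ in $\ker(\zt_m)$ and makes $\beta_{m+1}-\beta_m=\gamma\in\I_m$. From $\varphi_{m+1}(\zt_{m+1}(\beta_m))=\zt_m(\beta_m)=0$ we see $\zt_{m+1}(\beta_m)$ lies in $\ker(\varphi_{m+1})$, the ideal $p^m\cA_{m+1}$, which is canonically isomorphic to $\cA$ via $[p^m a_p+p^{m+1}\Z]\mapsto[a_p+p\Z]$; write $r(\beta_m)\in\cA$ for the corresponding element. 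If $\beta_m=\sum_{\bs}b_{\bs}z_{\bs}$, then $\zt_m(\beta_m)=0$ reads $\sum_{w(\bs)<m}b_{\bs}p^{w(\bs)}\H(\bs)\equiv 0\mod p^m$ for large $p$, and unwinding the definitions gives
\[
r(\beta_m)=\left[\,p^{-m}\sum_{w(\bs)<m}b_{\bs}\,p^{w(\bs)}\H(\bs)+p\Z\,\right]+\sum_{w(\bs)=m}b_{\bs}\,\zeta_{\cA}(\bs)\in\cA.
\]
The same computation applied to $\gamma=\sum_{w(\bs)\geq m}c_{\bs}z_{\bs}\in\I_m$ yields $r(\gamma)=\sum_{w(\bs)=m}c_{\bs}\,\zeta_{\cA}(\bs)$, which runs over the whole space $V_m:=\mathrm{span}_{\Q}\{\zeta_{\cA}(\bs):w(\bs)=m\}\subseteq\cA$ as the $c_{\bs}$ vary. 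So $\beta_m$ can be corrected exactly when $r(\beta_m)\in V_m$, which (the weight-$m$ summand above already lying in $V_m$) is the content of:

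\smallskip
\noindent\emph{Key Lemma.} If $\sum_{w(\bs)<m}a_{\bs}\,p^{w(\bs)}\H(\bs)\equiv 0\mod p^m$ for all large $p$, then $\big[\,p^{-m}\sum_{w(\bs)<m}a_{\bs}\,p^{w(\bs)}\H(\bs)+p\Z\,\big]\in V_m$.

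\smallskip
\noindent This Key Lemma is where I expect the real difficulty to lie: informally, it says the leftover one picks up on sharpening a weighted congruence by one more power of $p$ is always a \emph{homogeneous} congruence among finite \mzvs. The natural route is to establish weight-respecting $p$-adic expansions of multiple harmonic sums---for each composition $\bs$ and each order $N$, a congruence for $\H(\bs)$ modulo $p^N$ whose coefficient of $p^j$ reduces, mod $p$, to a fixed ($p$-independent) $\Q$-linear combination of finite \mzvs{} of weight $w(\bs)+j$. Such expansions are available in low degree---this is what the Bernoulli-number identities behind \eqref{eqwolst}, \eqref{eqexs} and \eqref{eqex} furnish, and it is the mechanism by which the two families of asymptotic relations derived in this paper are produced---and, granting sufficiently sharp expansions of this kind, the Key Lemma would follow by substituting into the hypothesis and tracking powers of $p$: the congruence kills the coefficients of $p^0,\dots,p^{m-1}$ modulo $p$, leaving the coefficient of $p^m$---a combination of weight-$m$ finite \mzvs, hence an element of $V_m$---as the quantity in the Key Lemma. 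The genuine obstacle is to prove such weight-graded expansions, in adequate precision, to \emph{all} orders; this seems to require new ideas and is tightly bound up with the (open) conjecture that the algebra of finite \mzvs{} is graded by weight, so I would not expect a complete proof to lie within reach of current methods.
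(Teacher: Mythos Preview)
The statement you are attempting to prove is \emph{Conjecture}~\ref{conext} in the paper: it is explicitly posed as an open problem, and the paper gives no proof. So there is nothing to compare your argument against; the relevant question is whether your proposal actually closes the gap, and you yourself correctly conclude that it does not.

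Your reduction is sound as far as it goes. The inclusion $\Ji+\I_n\subseteq\J{n}$ is indeed trivial, and the successive-approximation scheme you describe is the natural way to attack the reverse inclusion: correcting $\beta_m\in\ker(\zt_m)$ to $\beta_{m+1}\in\ker(\zt_{m+1})$ by an element of $\I_m$, then passing to the limit using completeness of $\hhqs$. Your identification of the obstruction class $r(\beta_m)\in\cA$ and of the target space $V_m$ is accurate. However, the ``Key Lemma'' you isolate is not a genuine reduction: it is essentially a restatement of the inductive step, and hence of the conjecture itself. Indeed, the Key Lemma asserts precisely that the obstruction to lifting a weighted congruence one power of $p$ higher always lies in $V_m$, which is exactly the content of $\ker(\zt_m)=\ker(\zt_{m+1})+\I_m$. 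Your proposed route via ``weight-respecting $p$-adic expansions'' of $\H(\bs)$ would, if it existed in the generality you need, already imply the weight-grading conjecture for finite \mzvs{}---so you have correctly located the difficulty but not reduced it. The paper notes (end of Sec.~\ref{secMHS}) that a version of this conjecture restricted to the subalgebra of symmetric functions is established in \cite{Ros12b}; the general case remains open.
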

\noindent This conjecture would imply the following consequence for homogeneous relations among finite \mzvs{}: suppose $n$ is a positive integer and $\alpha_{\bs}\in\Q$ for each composition $\bs$ of weight $n$, such that the relation for finite \mzvs{}
\[
\sum_{w(\bs)=n}\alpha_{\bs}\,\zeta_{\cA}(\bs)=0
\]
holds. Then there are coefficients $\beta_{\bs}\in\Q$ for each composition $\bs$ of weight $n+1$ such that for all sufficiently large $p$, we have the weighted congruence
\[
\sum_{w(\bs)=n}\alpha_{\bs}\,\H(\bs)\equiv p\sum_{w(\bs)=n+1}\beta_{\bs}\,\H(\bs)\mod p^2.
\]


In many instances values of the weighted finite multiple zeta function can be computed using convergent $p$-adic series identities. The following result is not sharp, but will suffice for our needs.
\begin{proposition}\label{propconv}
Let
\[
\alpha=\sum_{\bs}\alpha_{\bs}\,z_{\bs}\in\hh^1,
\]
and suppose there is a positive integer $k$ such that the denominator of each $\alpha_{\bs}$ is $k$-th power free. Then
\[
a_p:=\sum_{\bs}\alpha_{\bs}\,p^{w(\bs)}\H(\bs)
\]
is $p$-adically convergent and lies in $\Z_p$ for all sufficiently large $p$, and $\z(\alpha)=[a_p]$.
\end{proposition}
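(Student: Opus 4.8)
The plan is to reduce the statement to two elementary $p$-adic valuation estimates and then to unwind the definitions of $\as$ and of $\z$. First I would record that for every composition $\bs$ and every prime $p$ the harmonic sum $\H(\bs)=H_{p-1}(\bs)$ is a $p$-adic integer: in $\H(\bs)=\sum_{p-1\geq n_1>\cdots>n_k\geq 1}1/(n_1^{s_1}\cdots n_k^{s_k})$ each $n_i$ lies in $\{1,\ldots,p-1\}$ and is therefore a unit in $\Z_p$, so $\H(\bs)$ is a sum of $p$-adic units and $v_p(\H(\bs))\geq 0$; consequently $v_p\big(p^{w(\bs)}\H(\bs)\big)\geq w(\bs)$. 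Secondly, the hypothesis that the denominators of the $\alpha_{\bs}$ are $k$-th power free says exactly that $v_q(\alpha_{\bs})\geq 1-k$ for every prime $q$ and every composition $\bs$; taking $q=p$ and combining with the previous bound gives
\[
v_p\big(\alpha_{\bs}\,p^{w(\bs)}\H(\bs)\big)\geq w(\bs)-k+1
\]
for all $\bs$ and all $p$, a bound that is uniform in $p$.

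Convergence of $a_p$ is then immediate: grouping the series $\sum_{\bs}\alpha_{\bs}p^{w(\bs)}\H(\bs)$ by weight, there are only finitely many compositions of each weight $w$, while the weight-$w$ block has $p$-adic valuation at least $w-k+1\to\infty$; hence the terms tend to $0$ and, $\Q_p$ being complete, the series converges (in particular the partial sums over $\{w(\bs)<N\}$ are Cauchy), for every prime $p$. For the integrality claim I would observe that $\alpha_{\bs}p^{w(\bs)}\H(\bs)$ automatically lies in $\Z_p$ once $w(\bs)\geq k$, so the only possible obstruction comes from the finitely many compositions of weight $<k$; choosing $p$ larger than every prime dividing the denominator of $\alpha_{\bs}$ for one of those finitely many $\bs$ makes every term of the series $p$-integral, and since $\Z_p$ is closed the limit $a_p$ is $p$-integral as well. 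In particular $[a_p]\in\as$ is defined.

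It remains to identify $\z(\alpha)$ with $[a_p]$. Since $\as$ is the projective limit of the $\cA_n$ and $\pi_n\circ\z=\zt_n$, it suffices to prove $\zt_n(\alpha)=\pi_n([a_p])$ in $\cA_n$ for each $n\geq 1$, i.e.\ that for all but finitely many $p$ one has
\[
\sum_{w(\bs)<n}\alpha_{\bs}\,p^{w(\bs)}\H(\bs)\equiv a_p \pmod{p^n}.
\]
The difference of the two sides equals the convergent tail $\sum_{w(\bs)\geq n}\alpha_{\bs}p^{w(\bs)}\H(\bs)$. Splitting off the finitely many terms with $n\leq w(\bs)<n+k-1$, the remaining part $\sum_{w(\bs)\geq n+k-1}$ has $p$-adic valuation at least $(n+k-1)-k+1=n$ by the uniform estimate, while for $p$ exceeding the finitely many primes occurring in the denominators of the split-off coefficients each of those terms has valuation $\geq w(\bs)\geq n$. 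Thus the difference lies in $p^n\Z_p$ for all large $p$, which is the required congruence.

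The argument is essentially routine; the one point that needs care — and the step I expect to be the main obstacle — is the last one, where the naive tail bound only yields a congruence modulo $p^{\,n-k+1}$, so one must peel off the finitely many \emph{borderline-weight} compositions $n\leq w(\bs)<n+k-1$ and control them separately using that, for these fixed $\bs$, the primes dividing the denominators of $\alpha_{\bs}$ are finite in number. More broadly, the proof rests on keeping the uniform-in-$p$ estimate coming from $k$-th-power-freeness cleanly separate from the ``for all sufficiently large $p$'' estimates obtained by clearing fixed denominators.
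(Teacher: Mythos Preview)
Your proof is correct and follows essentially the same route as the paper's own argument: both use the uniform valuation bound $v_p(\alpha_{\bs}p^{w(\bs)}\H(\bs))\geq w(\bs)-k+1$ to get convergence, handle integrality by excluding the finitely many primes dividing denominators of low-weight coefficients, and establish the congruence modulo $p^n$ by peeling off the borderline weights $n\leq w(\bs)<n+k-1$ and clearing their fixed denominators. Your write-up is simply more explicit about the valuation arithmetic and the borderline cutoff than the paper's terse version.
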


\begin{proof}
The hypothesis that the denominators of the $\alpha_{\bs}$ are $k$-th power free implies that $\alpha_{\bs} p^{w(\bs)}\H(\bs)\to 0$ $p$-adically as $w(\bs)\to\infty$ for all $p$, so that $a_p$ converges to an element of $\Q_p$. We will have $a_p\in\Z_p$ for all $p$ not dividing the denominator of any $\alpha_{\bs}$ for $w(\bs)<k$. Finally,
\[
a_p\equiv \sum_{w(\bs)<n}\alpha_{\bs}\,p^{w(\bs)}\H(\bs)\mod p^n
\]
for all $p$ except the finitely many dividing the denominator of some $\alpha_{\bs}$ for $w(\bs)<n+k$. This completes the proof.
\end{proof}

%
%
\subsection{Generation of the space of asymptotic relations}
We would like to produce a generating set for the ideal $\Ji$. One difficulty is that from an algebraic viewpoint, this ideal is not very nice: it may not even be countably generated. However, as the kernel of a continuous map, $\Ji$ is closed in $\hhqs$. As we now show, every closed ideal of $\hhqs$ is the closure of a particular kind of countably generated ideal.

\begin{proposition}
Let $I\subset\hhqs$ be a closed ideal. Then there exist $\alpha_1,\alpha_2,\ldots\in I$, with $\alpha_n\to 0$, such that $I$ is the closure of the ideal generated by $\alpha_1,\alpha_2,\ldots$.
\end{proposition}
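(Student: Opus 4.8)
The plan is to exploit the filtration of $\hhqs$ by the open ideals $\I_n$, which form a countable neighborhood basis of $0$. Since $\hhqs$ is complete and $\I_n$ is open, an element of $\hhqs$ can be recovered from its images in the discrete quotients $\hhqs/\I_n$, and these quotients are finite-dimensional $\Q$-vector spaces (spanned by the $z_{\bs}$ with $w(\bs)<n$). The idea is to build the generators $\alpha_1,\alpha_2,\ldots$ by successively choosing elements of $I$ whose images in $\hhqs/\I_n$ generate the image of $I$ there.

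First I would fix, for each $n\geq 1$, a finite subset $S_n\subset I$ whose image in the finite-dimensional quotient $\hhqs/\I_n$ spans the image $(I+\I_n)/\I_n$; this is possible because that image is a finite-dimensional $\Q$-vector space. Second, I would arrange the elements of all the $S_n$ into a single sequence, but I must be careful about the convergence requirement $\alpha_n\to 0$: an element of $S_n$ need not lie in any $\I_m$. To fix this, for $\beta\in S_n$ I would replace $\beta$ by $\beta - \beta'$ where $\beta'\in I$ is chosen so that $\beta-\beta'\in\I_n$ — but this is circular unless I already know $I$ is generated in lower degrees. Instead, the cleaner route: interleave the generators so that from the $n$-th batch onward I only add \emph{corrections}. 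Concretely, build the sequence greedily: having chosen finitely many elements $\alpha_1,\ldots,\alpha_N\in I$ whose $\Q$-span, call it $V_N$, surjects onto $(I+\I_{n})/\I_n$ for all $n$ up to some bound, pass to the next $n$ and for each basis vector of $(I+\I_{n+1})/\I_{n+1}$ not already hit, pick a lift $\beta\in I$; since $(I+\I_n)/\I_n$ is already covered, I may subtract from $\beta$ an element of $V_N$ to arrange $\beta\in\I_{n}$ before adding it to the list. Doing this for each $n$ produces a sequence with $\alpha_m\in\I_{n}$ for all $m$ beyond the $n$-th stage, hence $\alpha_m\to 0$.

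Third, I would check that the closed ideal $J$ generated (topologically) by $\alpha_1,\alpha_2,\ldots$ equals $I$. The inclusion $J\subset I$ is immediate since $I$ is a closed ideal containing all $\alpha_m$. For $I\subset J$: given $\gamma\in I$, I would approximate it modulo $\I_n$ for each $n$. By construction the images of $\alpha_1,\ldots,\alpha_{N_n}$ span $(I+\I_n)/\I_n$, so there is a $\Q$-linear combination $\delta_n$ of these finitely many $\alpha_m$ with $\gamma-\delta_n\in\I_n$; note $\delta_n$ lies in the \emph{ideal} generated by the $\alpha_m$ (linear combinations suffice, no further multiplication needed), hence in $J$'s defining ideal. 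Then $\delta_n\to\gamma$ as $n\to\infty$, and since $J$ is closed, $\gamma\in J$. This gives $I=J$.

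The main obstacle is the convergence condition $\alpha_m\to 0$: naively listing spanning sets of the quotients produces generators that do not tend to zero, and one must use the fact that $I$ already surjects onto the lower quotients to subtract off a correction term and push each new generator into a high $\I_n$. Getting the bookkeeping of this interleaving right — making sure every element of $(I+\I_n)/\I_n$ is eventually in the span of the chosen $\alpha_m$ while simultaneously pushing later generators into deeper $\I_n$'s — is the only delicate point; everything else is formal manipulation with complete filtered rings.
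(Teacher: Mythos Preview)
Your proposal is correct and follows essentially the same approach as the paper: recursively choose finite generating sets for the images $(I+\I_n)/\I_n$ in the finite-dimensional quotients, and at each stage subtract off an element already in the span of the previously chosen generators to push the new generator into $\I_n$, ensuring $\alpha_m\to 0$. The only cosmetic difference is that you work with $\Q$-linear spans while the paper phrases the recursion in terms of ideals generated by the $S_j$; since each $\hhqs/\I_n$ is a finite-dimensional $\Q$-vector space, both formulations yield the same conclusion.
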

\begin{proof}
We claim that there exists a sequence of finite subsets $S_0, S_1,\ldots\subset I$ such that
\[
S_n\subset\I_n\sg\text{ and }\sg(S_0)+(S_1)+\ldots+(S_{n-1})+\I_n=I+\I_n
\]
for all $n\geq 0$. First we show the claim implies the desired result. Take the sequence $\alpha_1,\alpha_2,\ldots$ to be the elements of $S_1$, followed by the elements of $S_2$, and so on. The condition $S_n\subset\I_n$ implies $\alpha_n\to0$. By contruction, for all $n\geq1$ we have
\begin{equation}
\label{eqid}
(\alpha_1,\alpha_2,\ldots)\subset I\subset (\alpha_1,\alpha_2,\ldots)+\I_n.
\end{equation}
The ideals $\I_n$ are a neighborhood basis of 0, so we have
\[
\bigcap_{n=1}^\infty\bigg[ (\alpha_1,\alpha_2,\ldots)+\I_n\bigg]=\overline{(\alpha_1,\alpha_2,\ldots)}.
\]
Intersecting \eqref{eqid} over all $n$ gives
\[
(\alpha_1,\alpha_2,\ldots)\subset I\subset \overline{(\alpha_1,\alpha_2,\ldots)}.
\]
The rightmost containment is then equality because $I$ is closed.

To prove the claim, we construct $S_n$ recursively. We start with $S_0=\varnothing$. Suppose $S_0,\ldots,S_{n-1}$ have been constructed. The ring $\hhqs/\I_{n+1}$ is Noetherian (it is in fact finite dimensional as a vector space), so we may find finitely many elements $\beta_1,\ldots,\beta_k\in I$ whose images in $\hhqs/\I_{n+1}$ generate the ideal $(I+\I_{n+1})/\I_{n+1}$. By hypothesis $(S_0)+\ldots+(S_{n-1})+\I_{n}=I+\I_{n}$, so we may find $\gamma_j\in\I_n$ so that
\[
\beta_j-\gamma_j\in (S_0)+\ldots+(S_{n-1}).
\]
One now checks that $S_n:=\{\gamma_1,\ldots,\gamma_k\}$ has the desired properties.
\end{proof}

 This enables us to formulate a more precise version of Question \ref{quesar}.
\begin{ques4'}
\label{questgen}
Produce a sequence $\alpha_1,\alpha_2,\ldots\in\Ji$, with $\alpha_n\to0$, such that
\[
\Ji=\overline{(\alpha_1,\alpha_2,\ldots)}.
\]
\end{ques4'}

In \cite{Ros12b} we consider the closed subalgebra $\A\subset\hhqs$ of symmetric functions. We produce a sequence of elements of $\Ji\cap\A$ that, conditionally on a conjecture concerning Bernoulli numbers, generate an ideal with closure
$\Ji\cap\A$. This gives a conjectural answer to Question 4$'$ for the case of symmetric functions. We also show that a suitable modification of Conjecture 1, asserting the existence of an asymptotic extension of every weighted congruence, is true for symmetric functions.

\section{The asymptotic reversal and duality theorems}
\label{secATs}
In this section we prove two results giving asymptotic relations extending mod $p$ and mod $p^2$ congruences in the literature. Both results have the following form: we consider a pair $\CCC$ and $\DDD$ of continuous linear automorphisms  of $\hh^1$, with $\CCC$ the exponential of a topologically nilpotent derivation and $\DDD^2=\id$. The theorem statements are then that for all $\alpha\in\hh^1$,
\[
\CCC(\alpha)-\DDD(\alpha)\in\Ji.
\]

\subsection{The asymptotic reversal theorem}
There is an involution on the set of compositions, taking a composition to its reversal
\[
\overline{(s_1,\ldots,s_k)}:=(s_k,\ldots s_1).
\]
It is known (\cite{Hof04a} Theorem 4.5, \cite{Zha08} Lemma 3.3) that the homogeneous congruence
\begin{equation}
\label{eqrev}
\H(\bs)\equiv (-1)^{w(\bs)}\H(\overline{\bs})\pmod{p}
\end{equation}
holds for all primes $p$ and compositions $\bs$. This implies that for $w(\bs)=n$,
\[
z_{\bs}+(-1)^{n+1}z_{\overline{\bs}}\in\J{n+1}.
\]
The following theorem produces an asmptotic extension of this congruence.

\begin{theorem}[Asymptotic Reversal Theorem]
\label{eART}
Let $\bs=(s_1,\ldots,s_k)$ be a composition. For all primes $p$ we have a convergent $p$-adic series equality
\begin{equation*}
\label{eqart}
\H(\bs)=(-1)^{w(\bs)}\sum_{r_1,\ldots,r_k\geq 0}\left[\prod_{j=1}^k{s_j+r_j-1\choose r_j}\right]p^{r_1+\ldots+r_k}\H\big(s_k+r_k,\ldots,s_1+r_1\big).
\end{equation*}
\end{theorem}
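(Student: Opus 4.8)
The plan is to prove the stated $p$-adic identity directly for every prime $p$, by applying the reversal substitution $n\mapsto p-n$ in the series defining $\H(\bs)=H_{p-1}(\bs)$ and expanding the resulting factors as $p$-adically convergent binomial series. First I would start from
\[
\H(s_1,\ldots,s_k)=\sum_{p-1\geq n_1>\cdots>n_k\geq 1}\frac{1}{n_1^{s_1}\cdots n_k^{s_k}}
\]
and use the bijection $n\mapsto p-n$ of $\{1,\ldots,p-1\}$, which exists for every prime $p$; this is why the identity has no small-prime exceptions. Setting $m_j:=p-n_j$ turns $p-1\geq n_1>\cdots>n_k\geq 1$ into $1\leq m_1<\cdots<m_k\leq p-1$, so that
\[
\H(\bs)=\sum_{1\leq m_1<\cdots<m_k\leq p-1}\ \prod_{j=1}^k\frac{1}{(p-m_j)^{s_j}}.
\]

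Since each $m_j\in\{1,\ldots,p-1\}$ is a unit modulo $p$, I would then expand each factor as the $p$-adically convergent series
\[
\frac{1}{(p-m_j)^{s_j}}=(-1)^{s_j}m_j^{-s_j}(1-p/m_j)^{-s_j}=(-1)^{s_j}\sum_{r_j\geq 0}\binom{s_j+r_j-1}{r_j}\frac{p^{r_j}}{m_j^{s_j+r_j}},
\]
whose $r_j$-th term has $p$-adic valuation at least $r_j$. Substituting and pulling out the sign $\prod_j(-1)^{s_j}=(-1)^{w(\bs)}$ presents $\H(\bs)$ as a finite sum over the tuples $(m_1,\ldots,m_k)$ of convergent multiseries in $(r_1,\ldots,r_k)$. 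Next I would interchange the finite sum over the $m_j$ with the sums over the $r_j$; this is legitimate because a finite sum commutes with limits of partial sums and the multiseries are unconditionally convergent, a general term having $p$-adic valuation at least $r_1+\cdots+r_k$ (the binomial coefficients are integers, and every multiple harmonic sum is a $p$-adic integer since its denominators are products of units modulo $p$). After the interchange the inner sum over the $m_j$ is
\[
\sum_{1\leq m_1<\cdots<m_k\leq p-1}\prod_{j=1}^k\frac{1}{m_j^{s_j+r_j}}=\H\big(s_k+r_k,\,s_{k-1}+r_{k-1},\,\ldots,\,s_1+r_1\big),
\]
the identification following from the reindexing $n_i:=m_{k+1-i}$ that puts the largest index first. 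This is exactly the asserted formula, and the same valuation bound shows that the right-hand series converges for every $p$.

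Combined with Proposition \ref{propconv}, whose hypotheses hold trivially since the coefficients are integers, the identity exhibits the corresponding element of $\hh^1$ as a member of $\Ji$, i.e.\ as the promised asymptotic extension. The argument needs nothing deep beyond the binomial expansion; the two places where I would be careful are the index bookkeeping in the last display — confirming that after $m_j=p-n_j$ the variable $r_j$ ends up paired with $s_j$ inside the binomial coefficient, while the arguments of $\H$ come out in the reversed, shifted order $(s_k+r_k,\ldots,s_1+r_1)$ — and the unconditional $p$-adic convergence that licenses the interchange and the convergence of both sides. The bookkeeping is what is most easily gotten backwards, so I would write out the substitution explicitly; once that is in place the proof is short.
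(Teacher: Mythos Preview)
Your proposal is correct and is essentially the same argument as the paper's: both perform the substitution $m_j=p-n_j$, expand each $(p-m_j)^{-s_j}$ as a $p$-adic binomial series, and then swap the finite sum over the $m_j$ with the infinite sums over the $r_j$ to recognize the reversed multiple harmonic sums. Your write-up is slightly more careful about justifying the interchange and the index bookkeeping, and your closing remark about Proposition~\ref{propconv} anticipates the concise reformulation in Theorem~\ref{ecART}, but the underlying proof is identical.
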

Reducing this equation modulo $p$ yields \eqref{eqrev}.
\begin{proof}
For $1\leq m\leq p-1$ an integer, the binomial theorem gives the $p$-adically convergent identity
\[
\frac{1}{(p-m)^s}=\frac{(-1)^s}{m^s}\left(1-\frac{p}{m}\right)^{-s}=(-1)^s\sum_{r\geq0}{s+r-1\choose r}\frac{p^r}{m^{s+r}}.
\]
We then make the substitutions $n_i\leftrightarrow p-m_i$ in the definition of the multiple harmonic sum, giving
\begin{align*}
\H(\bs)&=\sum_{p-1\geq n_1>\ldots>n_k\geq 1}\frac{1}{n_1^{s_1}\ldots n_k^{s_k}}=\sum_{p-1\geq m_k>\ldots>m_1\geq 1}\frac{1}{(p-m_1)^{s_1}\ldots (p-m_k)^{s_k}}\\[10pt]
&=(-1)^{w(\bs)}\sum_{p-1\geq m_k>\ldots>m_1\geq 1}\,\,\,\,\sum_{r_1,\ldots,r_k\geq 0}\left[\prod_{j=1}^k{s_j+r_j-1\choose r}p^{r_j}m_j^{-s_j-r_j}\right]\\[10pt]
&=(-1)^{w(\bs)}\sum_{r_1,\ldots,r_k\geq 0}\left[\prod_{j=1}^k{s_j+r_j-1\choose r_j}\right]p^{r_1+\ldots+r_k}\H(s_k+r_k,\ldots,s_1+r_1)\\
\end{align*}
\end{proof}
In our algebraic framework, we can formulate a version of this theorem in terms of two automorphisms of $\hh^1$.
\begin{definition}
Let $\DR:\hh^1\to\hh^1$ be the continuous linear map
\[
\sum_{\bs}\alpha_{\bs}\,z_{\bs}\sg\longmapsto\sg\sum_{\bs}(-1)^{w(\bs)}\alpha_{\bs} z_{\overline{\bs}},
\]
which preserves the product $*$ and reverses the concatenation product. Let $\CR:\hh\to\hh$ be the continuous linear map given by
\begin{align*}
x\,\mapsto (1-x)^{-1}x\sg=\sg &x+x^2+x^3+\ldots\\
y\,\mapsto (1-x)^{-1}y\sg=\sg &y+xy+x^2y+\ldots,
\end{align*}
extended by linearity, continuity, and multiplicativity to be a homomorphism for the concatenation product.
This restricts to an automorphism of $\hh^1$, also called $\CR$.
\end{definition}

In terms of $\DR$ and $\CR$, Theorem \ref{eART} has the following concise form:

\begin{theorem}[Asymptotic Reversal Theorem, concise form]
\label{ecART}
For all $\alpha\in\hh^1$:
\[
\CR(\alpha)-\DR(\alpha)\in\Ji.
\]
\end{theorem}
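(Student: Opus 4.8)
The plan is to reduce to basis elements, compute $\CR$ on them explicitly, and then identify the resulting $p$-adic series through Theorem~\ref{eART}. Since $\CR$ and $\DR$ are continuous linear self-maps of $\hh^1$ and $\z$ is continuous with Hausdorff target $\as$, the composite $\alpha\mapsto\z(\CR(\alpha)-\DR(\alpha))$ is a continuous map $\hhqs\to\as$, and it vanishes identically as soon as it vanishes on every $z_{\bs}$, because the $z_{\bs}$ span a dense subspace of $\hhqs$. So it suffices to prove $\CR(z_{\bs})-\DR(z_{\bs})\in\Ji$ for each composition $\bs=(s_1,\ldots,s_k)$.

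First I would compute $\CR$ on a word. Since $\CR$ is a homomorphism for the concatenation product and $x$ commutes with $(1-x)^{-1}=\sum_{i\geq 0}x^i$, one finds $\CR(z_n)=\CR(x^{n-1}y)=(1-x)^{-n}x^{n-1}y=\sum_{r\geq 0}\binom{n+r-1}{r}z_{n+r}$, and multiplying out the product over the parts of $\bs$ in $\hh$ gives
\[
\CR(z_{\bs})=\prod_{j=1}^{k}\CR(z_{s_j})=\sum_{r_1,\ldots,r_k\geq 0}\left[\prod_{j=1}^{k}\binom{s_j+r_j-1}{r_j}\right]z_{(s_1+r_1,\ldots,s_k+r_k)}.
\]
The coefficients here are integers, so Proposition~\ref{propconv} applies and yields $\z(\CR(z_{\bs}))=[a_p]$, where $a_p$ is the convergent $p$-adic sum obtained from the right-hand side above by replacing each $z_{(s_1+r_1,\ldots,s_k+r_k)}$ by $p^{\,w(\bs)+r_1+\cdots+r_k}\,\H(s_1+r_1,\ldots,s_k+r_k)$. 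On the other hand $\DR(z_{\bs})=(-1)^{w(\bs)}z_{\overline{\bs}}$ by definition of $\DR$, so $\z(\DR(z_{\bs}))=(-1)^{w(\bs)}\big[p^{w(\bs)}\H(\overline{\bs})\big]$.

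To conclude I would apply Theorem~\ref{eART} not to $\bs$ itself but to its reversal $\overline{\bs}$. The right-hand side of that theorem for $\overline{\bs}$ is a sum over $(r_1,\ldots,r_k)$ of $\H$-values of reversed shifted compositions; relabelling the summation indices by $q_j:=r_{k+1-j}$ turns each reversed shifted composition into $(s_1+q_1,\ldots,s_k+q_k)$ and matches the binomial coefficients term by term, so after multiplying through by $(-1)^{w(\bs)}p^{w(\bs)}$ the theorem reads $(-1)^{w(\bs)}p^{w(\bs)}\H(\overline{\bs})=a_p$ for every prime $p$. Hence $\z(\CR(z_{\bs}))=[a_p]=\big[(-1)^{w(\bs)}p^{w(\bs)}\H(\overline{\bs})\big]=\z(\DR(z_{\bs}))$, i.e.\ $\CR(z_{\bs})-\DR(z_{\bs})\in\ker(\z)=\Ji$; together with the density reduction of the first paragraph this proves the theorem.

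The only genuinely delicate step is the index bookkeeping in the last paragraph: $\CR$ preserves the order of the parts of $\bs$, whereas $\DR$ and the right side of Theorem~\ref{eART} reverse it, so Theorem~\ref{eART} must be invoked for $\overline{\bs}$ and the summation variables carefully relabelled so that the two expressions coincide. Everything else—evaluating $\CR$ on a word, invoking Proposition~\ref{propconv} to pass from the formal series to a convergent $p$-adic series, and reducing to the dense subspace—is routine.
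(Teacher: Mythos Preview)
Your proof is correct and follows essentially the same route as the paper: reduce by linearity and continuity to $\alpha=z_{\bs}$, expand $\CR(z_{\bs})$ via the concatenation-multiplicativity of $\CR$ into the same binomial sum, and then invoke Theorem~\ref{eART}. You are simply more explicit than the paper about two points it leaves implicit---the appeal to Proposition~\ref{propconv} to evaluate $\z$ on the infinite sum, and the fact that Theorem~\ref{eART} must be applied to $\overline{\bs}$ with the summation indices relabelled so that the shifted compositions come out unreversed---but the argument is the same.
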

\begin{proof}
By linearity and continuity, it suffices to consider $\alpha=z_{\bs}$. We compute
\begin{align*}
\CR(z_{\bs})&=\prod_{j=1}^k\CR(x^{s_j-1}y)=\prod_{j=1}^k (1-x)^{-s_j}x^{s_j-1}y\\
&=\prod_{j=1}^k\,\,\sum_{b_j\geq 0} {s_j+b_j-1\choose b_j}x^{s_j-1+b}y\\
&=\sum_{b_1,\ldots,b_k\geq 0}\left[\prod_{j=1}^k{s_j+b_j-1\choose b_j}\right]z_{s_1+b_1}\ldots z_{s_k+b_k}
\end{align*}
The result now follows from Theorem \ref{eART}.
\end{proof}
It is shown in \cite{Iha06} (Proposition 7) that $\CR$ is the exponential of the derivation $\od$ (for the concatenation product) given on generators
\[
x\mapsto x^2,\gap y\mapsto xy.
\]
The automorphisms $\DR$ and $\CR$ generate an infinite dihedral group inside the group of continuous linear automorphisms of $\hh^1$:
\[
\DR^2=(\DR\CR\DR\CR)^2=1,\gap\#\left\langle\CR\right\rangle=\infty.
\]




\subsection{The asymptotic duality theorem}

Our next result is expressed in terms of two automorphisms of $\hh^1$.
\begin{definition}
Let $\DD:\hh\to\hh$ be the continuous linear map which is a ring homomorphism (for the concatenation product) given on generators by
\[
x\mapsto x+y,\gap y\mapsto -y.
\]
This restricts to a linear automorphism of $\hh^1\to\hh^1$, which we also denote $\DD$.
Let $\CD:\hh^1\to \hh^1$ be the continuous linear automorphism
\[
\alpha\mapsto (1+y)\left(\frac{1}{1+y}*\alpha\right).
\]
\end{definition}

Our results extends congruences modulo $p$ and $p^2$ in the literature. To express these congruences, we write $\H:\h^1\to\Q$ for the linear map given on basis elements by
\[
\H(z_\bs)=\H(\bs).
\]
Then $\DD$ restricts to an automorphism of $\h^1$, and Hoffman \cite{Hof04a} (Theorem 4.7) shows that for all compositions $\bs$, the congruence
\[
\H(\DD(z_\bs))\equiv\H(z_{\bs}) \mod p
\]
holds for all primes $p$. This was extended by Zhao \cite{Zha08} (Theorem 2.11), who proves a result equivalent to the congruence
\[
\H(\DD(z_\bs))\equiv \H(z_{\bs})+p\H(1\sqcup\bs)\mod p^2,
\]
where $1\sqcup(s_1,\ldots,s_k)$ is the composition $(1,s_1,\ldots,s_k)$.

\begin{theorem}[Asymptotic Duality Theorem]
\label{ecADT}
For all $\alpha\in\hh^1$, we have
\[
\CD(\alpha)-\DD(\alpha)\in\Ji.
\]
\end{theorem}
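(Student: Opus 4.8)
The plan is to reduce to basis elements and then, exactly as in the proof of Theorem \ref{eART}, to exhibit a convergent $p$-adic series identity refining the Hoffman--Zhao finite--duality congruences. Since $\CD$ and $\DD$ are continuous linear maps, $\Ji=\ker(\z)$ is closed, and the $z_{\bs}$ span a dense subspace of $\hhqs$, it suffices to prove $\CD(z_{\bs})-\DD(z_{\bs})\in\Ji$ for each composition $\bs$; by the description of $\Ji$ this amounts to showing that for every $n$ a suitable weighted congruence modulo $p^{n}$ holds for all large $p$, and the cleanest way to get all of these at once is a single $p$-adic identity.

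A preliminary simplification cuts the problem down. Writing $\tfrac1{1+y}=\sum_{m\ge0}(-1)^{m}z_{\{1\}^{m}}$ and recalling that $\H(\{1\}^{m})$ is the $m$-th elementary symmetric function of $1,\tfrac12,\dots,\tfrac1{p-1}$, one gets the exact evaluation
\[
\sum_{m\ge0}(-1)^{m}p^{m}\,\H(\{1\}^{m})=\prod_{j=1}^{p-1}\Big(1-\frac{p}{j}\Big)=(-1)^{p-1}=1
\]
for every odd prime $p$, hence $\z(\tfrac1{1+y})=1$ in $\as$. Because $\z$ is a ring homomorphism for $*$, this yields $\z(\tfrac1{1+y}*z_{\bs})=\z(z_{\bs})$; and since $\CD(z_{\bs})=(\tfrac1{1+y}*z_{\bs})+z_1\cdot(\tfrac1{1+y}*z_{\bs})$ (the second term being left concatenation by $z_1=y$), the case $\alpha=z_{\bs}$ of the theorem reduces to $\z\big(z_1\cdot(\tfrac1{1+y}*z_{\bs})\big)=\z\big(\DD(z_{\bs})-z_{\bs}\big)$. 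Writing $z_1\cdot(\tfrac1{1+y}*z_{\bs})=\sum_{\bt}c_{\bt}z_{\bt}$ with $c_{\bt}\in\Z$ and $w(\bt)>w(\bs)$, and noting that $\DD(z_{\bs})-z_{\bs}$ is homogeneous of weight $w(\bs)$, it suffices to prove the identity
\[
\H(\DD(z_{\bs}))-\H(\bs)=\sum_{\bt}c_{\bt}\,p^{\,w(\bt)-w(\bs)}\,\H(\bt)
\]
for all primes $p$, whose right-hand side converges $p$-adically by Proposition \ref{propconv} (the $c_{\bt}$ being integers); applying $\zt_{n}$ to it for each $n$ then gives the theorem.

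To establish this identity I would expand $\DD(z_{\bs})=(-1)^{\ell(\bs)}(x+y)^{s_1-1}y\cdots(x+y)^{s_k-1}y$ into words, apply $\H$, and re-sum the resulting finite $\Q$-combination of multiple harmonic sums $p$-adically. The key tools are the elementary generating-function identity $\prod_{n=1}^{p-1}(1-x/n)^{-1}=\sum_{\mathbf{c}}x^{w(\mathbf{c})}\H(\mathbf{c})$ (the sum over all compositions $\mathbf{c}$), which evaluates $\H$ on the individual ``$(x+y)$-blocks'', together with the expansion $\binom{p-1}{m}=(-1)^{m}\prod_{j=1}^{m}(1-p/j)$, which is precisely what injects the powers of $p$ and the strings of $1$'s demanded by the factor $\tfrac1{1+y}$; the remaining work is an induction on the depth $k$, peeling the blocks off one at a time. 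Carrying this out in parallel with a closed-form evaluation of $z_1\cdot(\tfrac1{1+y}*z_{\bs})$ from the recursion defining $*$ --- in the spirit of the computation $\CR(z_{\bs})=\prod_{j}(1-x)^{-s_j}x^{s_j-1}y$ in the proof of Theorem \ref{ecART} --- lets one match the coefficients $c_{\bt}$ term by term.

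I expect the combinatorics of that last step to be the main obstacle: one must organize the nested expansions into exactly the operation ``harmonically multiply by $\tfrac1{1+y}$, then left-concatenate by $z_1$'', signs included, while passing from a concatenation of $(x+y)$-blocks to a nested sum whose inner indices are constrained to be distinct. The merging of colliding indices is exactly what makes the harmonic product $*$ --- rather than the plain concatenation-reversal of the Asymptotic Reversal Theorem --- the correct operation in $\CD$, and keeping that merging synchronized with the $\binom{p-1}{m}$-expansions is where the delicacy lies. A good consistency check throughout is reduction modulo $p$ and modulo $p^{2}$, which must recover Hoffman's congruence $\H(\DD(z_{\bs}))\equiv\H(z_{\bs})\mod p$ and Zhao's refinement $\H(\DD(z_{\bs}))\equiv\H(z_{\bs})+p\,\H(1\sqcup\bs)\mod p^{2}$.
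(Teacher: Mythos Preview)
Your preliminary reduction is correct and essentially the same as the paper's: the identity $\sum_{m\ge0}(-1)^{m}p^{m}\H(\{1\}^{m})=1$ for odd $p$ is exactly the statement $\frac{y}{1+y}\in\Ji$ that the paper invokes at the end of its argument, and you have drawn from it the same consequence, namely that it suffices to prove
\[
\z\!\big(\DD(z_{\bs})\big)=\z\!\left(z_{\bs}+z_1\cdot\Big(\tfrac{1}{1+y}*z_{\bs}\Big)\right).
\]

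The gap is in what follows. You propose to obtain this by expanding $\DD(z_{\bs})=(-1)^{\ell(\bs)}(x+y)^{s_1-1}y\cdots(x+y)^{s_k-1}y$ word by word and re-summing, and you concede that ``the combinatorics of that last step'' is the main obstacle --- but you do not carry it out. In the paper this step is not done combinatorially at all; it is a direct consequence of a result of Hoffman (\cite{Hof04a}, Theorem~4.2 together with the proof of Theorem~4.6), which gives the exact finite formula
\[
\H(\DD(z_{\bs}))=\sum_{n=1}^{p}(-1)^{n+1}\binom{p}{n}H_{n-1}(\bs).
\]
Once you have this, the expansion $\binom{p}{n}=(-1)^{n+1}\frac{p}{n}\sum_{j\ge0}(-1)^{j}p^{j}H_{n-1}(\{1\}^{j})$ (which you essentially wrote down) immediately yields
\[
p^{w(\bs)}\H(\DD(z_{\bs}))=p^{w(\bs)}\H(\bs)+\sum_{n=1}^{p-1}\frac{p}{n}\,p^{w(\bs)}\Big(\sum_{j\ge0}(-1)^{j}p^{j}H_{n-1}(\{1\}^{j})\Big)H_{n-1}(\bs),
\]
and the right-hand side is precisely the $p$-adic evaluation of $z_{\bs}+y\big(\tfrac{1}{1+y}*z_{\bs}\big)$, since left concatenation by $y=z_1$ introduces an outer sum $\sum_{n}\frac{1}{n}$ with inner sums truncated at $n-1$, while the harmonic product encodes the product $H_{n-1}(\{1\}^{j})H_{n-1}(\bs)$ at the common level $n-1$. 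So the ``delicate'' matching you anticipate is in fact a one-line consequence of Hoffman's identity; without invoking (or reproving) that identity, your argument is incomplete. The generating-function identity $\prod_{n=1}^{p-1}(1-x/n)^{-1}=\sum_{\mathbf{c}}x^{w(\mathbf{c})}\H(\mathbf{c})$ you mention sums over \emph{all} compositions at once and does not by itself isolate $\H(\DD(z_{\bs}))$.
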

\begin{proof}
By linearity, it suffices to consider $\alpha=z_{\bs}$. First we show that, for $n$ a fixed non-negative integer and $p$ a prime, ${p\choose n}$ can be expressed as a sum involving weighted multiple harmmonic sums. We have
\begin{align*}
{p\choose n}&=\frac{p(p-1)\ldots(p-n+1)}{n!}\\
&=(-1)^{n+1}\frac{p}{n}\left(1-\frac{p}{1}\right)\left(1-\frac{p}{2}\right)\ldots \left(1-\frac{p}{n-1}\right)\\
&=(-1)^{n+1}\frac{p}{n}\sum_{j\geq 0}(-1)^jp^jH_{n-1}(\{1\}^j).
\end{align*}
A result of Hoffman (\cite{Hof04a}, Theorem 4.2 and the proof of Theorem 4.6) shows that for any composition $\bs$,
\[
\H(\DD(z_\bs))=\sum_{n=1}^p (-1)^{n+1}{p\choose n}\H(\bs)
\]
Multiplying through by $p^{w(\bs)}=p^{w(\bs^*)}$ and using our expression for ${p\choose n}$, we have
\begin{gather*}
\z(\DD(z_\bs))=p^{w(\bs^*)}\H(\DD(z_\bs))=\sum_{n=1}^{p}{p\choose n}(-1)^{n+1} p^{w(\bs)}H_{n-1}(\bs)\\
=p^{w(\bs)}\H(\bs)+\sum_{n=1}^{p-1}\frac{p}{n}p^{w(\bs)}\left(\sum_{j\geq 0}(-1)^jp^jH_{n-1}(\{1\}^j)\right)H_{n-1}(\bs)\\
=\z\lp z_{\bs}+y\lp\frac{1}{1+y}*z_\bs\rp\rp.
\end{gather*}
The above computation is linear and continuous is $z_{\bs}$, we for all $\alpha\in\hh^1$ we have
\begin{equation}\label{bd}
\DD(\alpha)-\alpha-y\lp\frac{1}{1+y}*\alpha\rp\in\Ji.
\end{equation}

We observe the asymptotic relation
\[
\sum_{n\geq 1}(-1)^{n+1}p^n\H(\{1\}^n)=0,
\]
which follows for $p\geq 3$ from \cite{Ros12a} (Proposition 2.1, with $n=p-1$, $j=0$). This implies that $y-y^2+y^3-\ldots=\frac{y}{1+y}\in\Ji$. Using this, we can put \eqref{bd} in the desired form. This completes the proof.
\end{proof}

It is shown in \cite{Iha06} (Proposition 6) the $\CD$ is the exponential of the derivation (for the concatenation product) given on generators by
\[
x\mapsto 0,\gap y\mapsto -\sum_{n\geq 1}\frac{x^n y+yx^{n-1}y}{n}.
\]
We also have $\DD^2=\id$.


\section{Calculations in low weight}
\label{seccalc}
In the previous section we gave two methods to produce elements of $\Ji$. In this section we show how to use these elements to write down various weighted congruences. The techniques can be adapted to accomodate additional asymptotic relations that may become known in the future. A script for performing these and other computations in computer algebra system Mathematica can be found in \cite{Mathematica}.

%
%
\subsection{Numerical computation}
For $i=1,2$, we define functions $f_i:\hh^1\to\hh^1$ by
\[
f_i(\alpha):=\CCC_i(\alpha)-\DDD_i(\alpha).
\]
Each $f_i$ is $\Q$-linear, maps each $\I_n$ into itself, and the results of Sec.\ \ref{secATs} imply $f_i(\alpha)\in\Ji$ for all $\alpha\in\hh^1$.

We will work in the quotient $\hh^1/\I_n$, which is finite dimensional. For computational purposes we must choose an ordered basis of $\hh^1/\I_n$, so we choose the elements $z_{\bs}$ as $\bs$ ranges over the compositions of weight less than $n$. We find it convenient to order the compositions determining our basis elements first by weight (lowest weight comes first), then by lexicographic order (smaller numbers come first).

We have that 
\begin{equation}\label{eqf}
\alpha*f_i(\beta)\in\Ji
\end{equation}
for $i=1,2$ and all $\alpha,\beta$. Because we work in $\hh^1/\I_n$ and \eqref{eqf} is bilinear in $\alpha$ and $\beta$, it suffices to take $\alpha$, $\beta$ in our chosen basis for $\hh^1/\I_n$. This gives us a finite collection of elements.

\begin{definition}
Let $M_n$ be the matrix whose rows are indexed by triples  $(\bs_1,\bs_2,i)$ with $w(\bs_1)+w(\bs_2)<n$ and $i\in\{0,1\}$, whose columns are indexed by compositions $\bs$ with $w(\bs)<n$,  and whose entry in row  $(\bs_1,\bs_2,i)$, column $\bs$ is the coefficient of $z_{\bs}$ in
\[
z_{\bs_1}*f_i(z_{\bs_2})\in\hh^1/\I_n.
\]
The entries of $M_n$ are rational numbers, and we identify the row span of $M_n$ with the space weighted congruence holding modulo $p^n$ which can be derived from the results of Sec.\ \ref{secATs}. We say a compositions $\bs$ is \emph{essential} if $\bs$ is not a pivot column of $M_n$ for any (or equivalently all) $n>w(\bs)$.
\end{definition}
This notion of essential depends on our choice of ordering for compositions. It also depends on the source of asymptotic relations (here we consider only those asymptotic relations coming from Sec.\ \ref{secATs}). The set
\[
\{\zt_n(z_{\bs}):w(\bs)<n,\text{ $\bs$ essential}\}
\]
spans the image of $\zt_n$.
For example, when $n=7$ we find the essential compositions are $(2,1)$, $(4,1)$, and $(4,1,1)$.


\subsection{Example computations}
Given any element $\alpha\in\hh^1$ and any $n\geq 1$, one may compute the matrix $M_n$ described above and perform row reduction to find a unique element
\[
\p(\alpha)=\sum_{\substack{w(\bs)<n\\\bs\text{ essential}}}\alpha'_{\bs}\,z_{\bs}
\]
which is congruent modulo the row span of $M_n$ to the image of $\alpha$ in $\hh^1/\I_n$. This will mean $\alpha-\p(\alpha)\in\J{n}$. The map $\alpha\mapsto\p(\alpha)$ is linear. Note that we have ordered compositions by weight, so $\p$ takes $\I_m$ to $\I_m$ for all $m\geq 1$.

\subsubsection{Extending known congruences}\label{sswolst} Given $\alpha\in\hh$, we have $\alpha-\p(\alpha)\in\J{n}$, so we get a weighted congruence. As an example we take $\alpha=z_{(1)}$, $n=7$. Numerical computation gives
\[
\p(\alpha)=-\frac{1}{3}z_{(2,1)}+\frac{1}{6}z_{(4,1)}+\frac{1}{9}z_{(4,1,1)},
\]
so the weighted congruence
\[
\H(1)+\frac{1}{3}p^2\H(2,1)-\frac{1}{6}p^4\H(4,1)-\frac{1}{9}p^5\H(4,1,1)\mod p^6
\]
holds for all sufficiently large primes $p$. Reducing modulo $p^6$ gives the congruence \eqref{eqex} stated in the introduction, which is an extension of Wolstenholme's congruence. This calculation can be repeated with $\alpha$ replaced by any element of $\hh^1$.

\subsubsection{Congruences holding modulo high powers}
Given $\alpha_1,\ldots,\alpha_k\in\hh^1$ and $n\geq 1$, we can compute $\p(\alpha_1),\ldots,\p(\alpha_n)$. The $\p(\alpha_i)$ lie in a vector space spanned by the essential compositions of weight less than $n$. This space is not too big, so we may attempt to find a non-trivial linear relation
\[
\sum_{i=1}^k a_i \p(\alpha_i)=0.
\]
If we can find such a relation, we will have
\[
\sum_{i=1}^k a_i\alpha_i\in\Ji+\I_n\subset\J{n},
\]
giving a weighted congruence .
As an example, we set $n=10$ and take the $\alpha_i$ to be $z_{\bs}$ for compositions $\bs$ of weight 5 and depth at most 2: $\alpha_1=z_{(1,4)}$, $\alpha_2=z_{(2,3)}$, $\alpha_3=z_{(3,2)}$, $\alpha_4=z_{(4,1)}$, $\alpha_5=z_{(5)}$. We compute the corresponding elements $\p(\alpha_1),\ldots,\p(\alpha_5)$ and we find they satisfy the linear relation
\[
3\p(\alpha_1)-\p(\alpha_2)-\p(\alpha_3)+3\p(\alpha_4)+2\p(\alpha_5)=0.
\]
This shows that the homogeneous weighted congruence
\[
3\H(1,4)-\H(2,3)-\H(3,2)+3\H(4,1)+2\H(5)\equiv 0\mod p^5
\]
holds for all $p$ sufficiently large.

%
%
\section{The image of $\z$}
\label{secim}
In this section we investigate the image of the weighted finite multiple zeta function $\z$.
Next we start with the following:

\begin{proposition}
\label{propclim}
The image of $\zt_\infty$ is closed in $\as$.
\end{proposition}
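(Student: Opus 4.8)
The plan is to exhibit the image of $\zt_\infty$ as a closed subset of $\as$ by realizing $\as$ as a projective limit and checking the image behaves well under the projections $\pi_n$. Since $\as = \varprojlim \cA_n$ with the projective limit topology (so that the $\pi_n^{-1}(0)$ form a neighborhood basis of $0$), a subset $X\subset\as$ is closed if and only if it equals its "closure coordinate by coordinate", i.e. $X = \bigcap_n \pi_n^{-1}(\pi_n(X))$. So the first step is to reduce the problem to an analysis of the groups $\pi_n(\operatorname{im}\zt_\infty) = \operatorname{im}\zt_n \subset \cA_n$ for each $n$, using that $\pi_n\circ\zt_\infty = \zt_n$.

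Next I would observe that each $\zt_n$ is a $\Q$-linear map from the $\Q$-vector space $\hhqs$ to $\cA_n$, and its image is a $\Q$-subspace of $\cA_n$. The key structural point is that $\zt_n$ factors through the finite-dimensional quotient $\hhqs/\I_n$ (since $\I_n\subset\ker\zt_n$), so $\operatorname{im}\zt_n$ is a finite-dimensional $\Q$-subspace of $\cA_n$. The relevant consequence is compatibility: under $\varphi_n\colon\cA_n\to\cA_{n-1}$ we have $\varphi_n(\operatorname{im}\zt_n)=\operatorname{im}\zt_{n-1}$, so the images form a genuine projective system of subgroups. Then I claim $\operatorname{im}\zt_\infty = \varprojlim \operatorname{im}\zt_n$ inside $\varprojlim\cA_n = \as$. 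One containment is immediate. For the reverse, given a compatible family $(r_n)$ with each $r_n\in\operatorname{im}\zt_n$, I want to lift it to a single $\alpha\in\hhqs$ with $\zt_\infty(\alpha)=(r_n)$: choose $\alpha^{(1)}\in\hhqs$ with $\zt_1(\alpha^{(1)})=r_1$, then inductively correct $\alpha^{(n-1)}$ by an element of $\I_{n-1}$ (using surjectivity of $\operatorname{im}\zt_n\twoheadrightarrow\operatorname{im}\zt_{n-1}$ and that $\zt_n$ is onto its image) to get $\alpha^{(n)}$ with $\zt_n(\alpha^{(n)})=r_n$ and $\alpha^{(n)}\equiv\alpha^{(n-1)}\bmod\I_{n-1}$; the sequence $\alpha^{(n)}$ is Cauchy in $\hhqs$ (which is complete, being a completion), so converges to $\alpha$, and continuity of $\zt_\infty$ gives $\zt_\infty(\alpha)=(r_n)$.

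Finally, $\varprojlim\operatorname{im}\zt_n$ is closed in $\varprojlim\cA_n$: it is exactly $\bigcap_n\pi_n^{-1}(\operatorname{im}\zt_n)$, an intersection of preimages of subsets of the discrete rings $\cA_n$ under the continuous maps $\pi_n$. Each $\pi_n^{-1}(\operatorname{im}\zt_n)$ is clopen (as the preimage of any subset of a discrete space), hence closed, so the intersection is closed. Combining with the identification of the previous paragraph, $\operatorname{im}\zt_\infty$ is closed in $\as$.

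The main obstacle is the lifting/Cauchy argument in the second paragraph: one must be careful that the inductive corrections can be chosen in $\I_{n-1}$ so that the partial sums form a genuine Cauchy sequence with respect to the $\I_n$-filtration, which relies on $\I_n\subset\ker\zt_n$ (so corrections by $\I_n$ don't disturb the already-fixed coordinates) together with completeness of $\hhqs$. Everything else — finite-dimensionality of $\hhqs/\I_n$, compatibility of the $\zt_n$, and the topological fact that preimages of subsets of discrete spaces are clopen — is routine.
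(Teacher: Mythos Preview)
Your overall strategy---identify $\operatorname{im}\zt_\infty$ with $\varprojlim_n \operatorname{im}\zt_n$ and then observe the latter is closed---is sound and is essentially what the paper does. The gap is in the lifting step. To pass from $\alpha^{(n-1)}$ to $\alpha^{(n)}$ with $\alpha^{(n)}\equiv\alpha^{(n-1)}\bmod\I_{n-1}$ and $\zt_n(\alpha^{(n)})=r_n$, you need a $\gamma\in\I_{n-1}$ with $\zt_n(\gamma)=r_n-\zt_n(\alpha^{(n-1)})$. Unwinding, this requires
\[
\ker\zt_{n-1}\;\subseteq\;\I_{n-1}+\ker\zt_n,
\]
and that inclusion is exactly the content of the paper's Conjecture~\ref{conext} (that every weighted congruence admits an asymptotic extension). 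Surjectivity of $\operatorname{im}\zt_n\twoheadrightarrow\operatorname{im}\zt_{n-1}$ and the tautology that $\zt_n$ surjects onto its image do not give you this; they tell you some $\beta$ hits $r_n$, but not that $\beta$ can be chosen congruent to $\alpha^{(n-1)}$ modulo $\I_{n-1}$.

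The paper circumvents this by a Mittag--Leffler argument rather than a naive step-by-step lift. For $r=(r_n)$ in the closure of the image, set $S_n:=\{\beta+\I_n\in\hhqs/\I_n:\zt_n(\beta)=r_n\}$. Each $S_n$ is a nonempty coset of $(\ker\zt_n+\I_n)/\I_n$, and the transition maps $S_m\to S_n$ need \emph{not} be surjective; but for fixed $n$ the images of $S_m$ in $\hhqs/\I_n$ are cosets of a descending chain of subspaces, which stabilizes because $\hhqs/\I_n$ is finite-dimensional (Artinian). Mittag--Leffler then yields $\varprojlim S_n\neq\varnothing$, and any element there is the desired preimage. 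Your finite-dimensionality observation is the right ingredient, but it has to be used to stabilize images (allowing you to ``back up'' the choice of $\alpha^{(n-1)}$ before lifting), not to force surjectivity of the one-step transition.
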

We recall that a complete topological ring is called \emph{linearly topologized} if the topology admits a neighborhood basis of ideals. Both topological rings $\hhqs$ and $\as$ considered so far are linearly topologized.
Proposition \ref{propclim} is an immediate consequence of a more general fact:

\begin{lemma}
Let $\varphi:R_1\to R_2$ be a continuous homomorphism of linearly topologized rings, and suppose $R_1$ is the projective limit of a sequence of Artinian rings. Then the image of $\varphi$ is closed.
\end{lemma}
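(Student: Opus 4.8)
The plan is to exploit the fact that $R_1 = \varprojlim S_m$ for Artinian rings $S_m$, so that each quotient $R_1 \to S_m$ has an Artinian (hence Noetherian) image, and to combine this with the linear topology on $R_2$ to show that the closure of $\varphi(R_1)$ adds no new points. Concretely, let $x \in R_2$ lie in the closure of $\varphi(R_1)$; I want to produce $r \in R_1$ with $\varphi(r) = x$. Fix a neighborhood basis $\{J_\lambda\}$ of $0$ in $R_2$ consisting of ideals, and write $K_m = \ker(R_1 \to S_m)$, so that $\{K_m\}$ is a neighborhood basis of $0$ in $R_1$ (after passing to a cofinal system we may assume it is indexed by $m \in \mathbb{N}$ and decreasing). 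Since $x$ is in the closure of $\varphi(R_1)$, for each $\lambda$ the coset $x + J_\lambda$ meets $\varphi(R_1)$; the goal is to organize these approximations coherently enough to take a limit, using completeness of $R_1$.

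The key step is to show that the set $T_\lambda := \{ r \in R_1 : \varphi(r) \equiv x \bmod J_\lambda \}$ is a nonempty coset of the ideal $\varphi^{-1}(J_\lambda)$, and that the images $\overline{T_\lambda} \subset R_1/K_m$ stabilize in $m$ for fixed $\lambda$ and form a consistent family in $\lambda$, so that the intersection $\bigcap_\lambda T_\lambda$ is nonempty. The mechanism for nonemptiness of the intersection is the following: for each $\lambda$, consider the images of $T_\lambda$ in the Artinian quotients $R_1/K_m$. Because $R_1/K_m$ is Artinian and the $T_\lambda$ (as $\lambda$ varies) form a directed family of cosets of ideals of $R_1$, their images in $R_1/K_m$ form a descending chain of nonempty cosets of ideals, which must stabilize; hence each $R_1/K_m$ contains a well-defined coset $C_m$ that is the common image of all sufficiently fine $T_\lambda$. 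These $C_m$ are compatible under the maps $R_1/K_{m+1} \to R_1/K_m$, so by completeness of $R_1 = \varprojlim R_1/K_m$ they glue to a coset $C \subset R_1$, and any $r \in C$ satisfies $\varphi(r) \equiv x \bmod J_\lambda$ for all $\lambda$, i.e., $\varphi(r) - x \in \bigcap_\lambda J_\lambda = 0$ since $R_2$ is (Hausdorff, being complete) linearly topologized. Thus $\varphi(r) = x$ and $x \in \varphi(R_1)$.

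I would carry this out in the following order: first, set up notation and reduce to checking that a point $x$ in the closure lies in the image; second, record that the $K_m$ are cofinal open ideals with $R_1/K_m$ Artinian and that $\varphi$ continuous means each $\varphi^{-1}(J_\lambda) \supseteq K_{m(\lambda)}$ for some $m(\lambda)$; third, prove the stabilization of the descending chain of coset-images in each fixed Artinian quotient $R_1/K_m$ — this is the heart of the argument and uses the descending chain condition on ideals together with the observation that a strictly descending chain of cosets of ideals forces a strictly descending chain of the ideals themselves; fourth, assemble the compatible family $(C_m)$ and invoke completeness of $R_1$; fifth, conclude using Hausdorffness of $R_2$.

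The main obstacle I anticipate is the stabilization step in the third item, specifically handling the bookkeeping between the two index sets ($\lambda$ for the topology on $R_2$ and $m$ for the projective limit on $R_1$) and making sure the stabilized cosets $C_m$ really are compatible across $m$ — one must check that refining $\lambda$ past the thresholds $m(\lambda)$ and $m{+}1$ does not disturb the image in $R_1/K_m$, which requires that the Artinian chain condition be applied uniformly. A secondary subtlety is confirming $T_\lambda \neq \varnothing$ for every $\lambda$: this is exactly the statement that $x$ is in the closure, so it is automatic, but one should note that it is $T_\lambda$, not merely its image in some quotient, that is nonempty, which is what lets the glued coset $C$ be nonempty rather than just the projective limit of nonempty sets (a distinction that matters since arbitrary projective limits of nonempty sets can be empty — here it is rescued by the Artinian, hence eventually-constant, structure).
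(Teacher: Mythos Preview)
Your proposal is correct and follows essentially the same approach as the paper: take preimages $T_\lambda = \varphi^{-1}(x+J_\lambda)$, push them into the Artinian quotients $R_1/K_m$, use the descending chain condition to stabilize, and glue via completeness of $R_1$ --- the paper phrases the stabilization step as the Mittag-Leffler condition for the inverse system of preimages, which is exactly what your third step establishes. Your write-up is in fact more careful than the paper's about the bookkeeping between the two index sets, and your observation that the projection $R_1/K_{m+1}\to R_1/K_m$ carries $C_{m+1}$ \emph{onto} $C_m$ (by choosing $\lambda$ past both stabilization thresholds) is precisely what makes the limit nonempty.
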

\begin{proof}
Let $R_1\supset J_1\supset J_2\supset\ldots$ and $\{I_\alpha\subset R_2\}$ be ideals of forming neghborhood bases of 0.
Then for each $\alpha$
\[
J_\alpha:=\varphi^{-1}(I_\alpha)
\]
is an open ideal of $R_1$, so contains some $I_n$, say $I_{n(\alpha)}$. We get maps 
\[
\varphi_\alpha:R_1/I_{n(\alpha)}\to R_2/I_\alpha.
\]

Suppose $r\in R_2$ is in the closure of the image of $\varphi$. For each $\alpha$, define
\[
S_\alpha:=\varphi_\alpha^{-1}(r+I_\alpha)\subset R_1.
\]
This is a coset of the ideal $I_{n(\alpha)}$. The image of $\varphi$ intersects $r+I_\alpha$ nontrivially, so $S_\alpha$ is nonempty. For $1\leq n\leq m$, the quotient map $R_1/I_m\to R_1/I_n$ takes $S_m$ into $S_n$. The image of $S_m$ is a coset of an ideal in $R_1/I_n$, and by hypothesis the ring $R_1/I_{n}$ is Artinian so satisfies the descending chain condition on cosets of ideals. This means the inverse system $S_1\leftarrow S_2\leftarrow\ldots$ satisfies the Mittag-Leffler condition, so since each $S_n$ is non-empty, there are elements $r'$ of $R_1$ which map into every $S_n$. Any such $r'$ will map to $r$ by $\varphi$. 
\end{proof}


The closed subring $\im(\z)\subset\as$ is very small, in the following precise sense: for each integer $n\geq 1$, the map $\zt_n:\hhqs$ to $\cA_{n}$ factors through the countable ring $\hhqs/I_n$. This means the $\pi_n(\im(\z))\subset\cA_n$ is countable. However, $\cA_n$ is the quotient of a countably infinite product of finite sets by a countable ideal, so has continuum cardinality.

%
%
\subsection{Asymptotic representability}
Now we produce certain some in the image of the weighted finite multiple zeta function $\z$. First we give a definition.

\begin{definition}
A collection of elements $a_p\in\Z_p$ is said to be \emph{asymptotically representable by weighted multiple harmonic sums}, or simply \emph{asymptotically representable}, if the corresponding element $[a_p]\in \as$ is in the image of $\z$. In other words, the collection $a_p\in\Z_p$ is asymptotically representable if and only if there are coefficients $\alpha_{\bs}\in\Q$ not depending on $p$ such that for all $n\geq 1$ the congruence
\[
a_p\equiv\sum_{w(\bs)<n}\alpha_{\bs}\,p^{w(\bs)}\H(\bs)\mod p^n
\]
holds for all sufficiently large $p$.
\end{definition}

Our next result concerns a family of sums related to multiple harmonic sums.
\begin{theorem}
For all compositions $\bs=(s_1,\ldots,s_k)$ and all positive integers $r$, the generalized weighted multiple harmonic sum
\[
p^{w(\bs)}H_{pr}^{(p)}(\bs):=p^{w(\bs)}\sum_{\substack{pr\geq n_1>\ldots>n_k\geq 1\\p\nmid n_1n_2\ldots n_k}}\frac{1}{n_1^{s_1}\ldots n_k^{s_k}}
\]
is asymptotically representable.
\end{theorem}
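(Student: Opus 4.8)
The plan is to reduce the generalized sum $H_{pr}^{(p)}(\bs)$ to ordinary multiple harmonic sums $\H(\bt)$ via two maneuvers, each of which is controlled $p$-adically and hence expressible at the level of $\hhqs$. First I would remove the upper bound $pr$ in favor of $p-1$. The sum $H_{pr}^{(p)}(\bs)$ runs over $n_1,\ldots,n_k$ in $[1,pr]$ coprime to $p$; writing each such $n_i$ as $n_i = p m_i + \epsilon_i$ with $0 \le m_i \le r$ and $1 \le \epsilon_i \le p-1$ (so $1\le n_i\le pr$ with $p\nmid n_i$, and breaking ties by the ordering of the $m_i$), the binomial expansion
\[
\frac{1}{(pm_i+\epsilon_i)^{s_i}} = \frac{1}{\epsilon_i^{s_i}}\Bigl(1 + \frac{pm_i}{\epsilon_i}\Bigr)^{-s_i} = \sum_{t_i\ge 0}(-1)^{t_i}\binom{s_i+t_i-1}{t_i} m_i^{t_i}\,\frac{p^{t_i}}{\epsilon_i^{s_i+t_i}}
\]
converges $p$-adically and, after summing over the finitely many choices of the $m_i\in\{0,\dots,r\}$ and the ordering constraints among equal-$m_i$ blocks, expresses $p^{w(\bs)}H_{pr}^{(p)}(\bs)$ as a (now $p$-adically convergent, by Proposition \ref{propconv}-type bookkeeping on denominators) $\Q$-linear combination of $p^{w(\bt)}\H(\bt)$ for compositions $\bt$ obtained from $\bs$ by increasing the entries. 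The combinatorial coefficients $\binom{s_j+t_j-1}{t_j}$ have denominator $1$, and the sums over the $m_i$ and the orderings introduce only bounded denominators; so the whole expression is an element of $\hh^1$ whose image under $\z$ is exactly $[p^{w(\bs)}H_{pr}^{(p)}(\bs)]$ by the argument of Proposition \ref{propconv}.

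The one genuinely delicate point is the $p \nmid n_1 n_2 \cdots n_k$ restriction in the case $r=1$ (or the analogous restriction inside each block), which is not a truncation but a sieve on the summation variables. Here I would use inclusion–exclusion: removing the condition $p \nmid n_i$ and instead subtracting the terms with $p \mid n_i$, the latter being of the form $n_i = p\ell_i$ with $1 \le \ell_i \le r$, which again after the binomial expansion contributes $p$-adically small terms of strictly larger weight. Iterating over all subsets of $\{1,\dots,k\}$ on which $p$ divides, one gets $H_{pr}^{(p)}(\bs)$ as a finite alternating sum of unrestricted sums, each of which has already been handled. Because every application of the binomial expansion gains a factor of $p$ per unit of weight increase, each rewriting step is continuous for the topology on $\hhqs$, and the resulting formal infinite sum $\alpha \in \hh^1$ satisfies $\z(\alpha) = [p^{w(\bs)}H_{pr}^{(p)}(\bs)]$; this is precisely the assertion that the collection $a_p := p^{w(\bs)}H_{pr}^{(p)}(\bs)$ is asymptotically representable.

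The main obstacle I anticipate is bookkeeping rather than conceptual: one must verify that after carrying out the block decomposition $n_i = pm_i+\epsilon_i$, reassembling the ordering conditions $n_1 > \cdots > n_k$ in terms of the $(m_i,\epsilon_i)$ does not create denominators growing with weight. The ordering splits into finitely many cases according to which consecutive $m_i$'s are equal versus strictly decreasing; within a block of equal $m$-value the $\epsilon$'s must themselves be strictly ordered, producing an honest multiple harmonic sum $H_{p-1}$ of the corresponding sub-composition, while across blocks the factors decouple and one multiplies using the harmonic product $*$. Since $\hh^1_*$ is a ring and multiplication is continuous, this causes no trouble; the only thing to check is the denominator bound, which follows because the $\epsilon$-sums are genuine $H_{p-1}$'s (integer-coefficient quasi-symmetric functions) and the $m$-sums range over a finite set $\{0,\dots,r\}$ contributing rational constants independent of $p$. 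Thus, invoking Proposition \ref{propconv} on the finitely-bounded-denominator pieces and continuity of $\z$ to pass to the limit, the generalized weighted multiple harmonic sum is asymptotically representable, as claimed.
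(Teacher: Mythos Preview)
Your proposal is correct and follows essentially the same route as the paper: write $n_i = pm_i + \epsilon_i$ with $0 \le m_i \le r-1$ and $1 \le \epsilon_i \le p-1$, expand $(pm_i+\epsilon_i)^{-s_i}$ binomially, note that for a fixed tuple $(m_1,\ldots,m_k)$ the $\epsilon$-sum factors as a product of genuine $H_{p-1}(\bt_j)$'s over the blocks of constant $m$-value (realized as a harmonic product $z_{\bt_0}*\cdots*z_{\bt_{r-1}}$ in $\hhqs$), and then invoke Proposition~\ref{propconv} on the resulting integer-coefficient element. Your second paragraph on inclusion--exclusion is an unnecessary detour---the decomposition with $1 \le \epsilon_i \le p-1$ already enforces $p\nmid n_i$ automatically, so no sieve is required---but it does no harm to the argument.
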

\begin{proof}
First we note the convergent $p$-adic expansion
\[
(jp+n)^{-s}=\sum_{i=0}^{\infty}{-s\choose i}j^i\frac{p^i}{n^{s+i}},
\]
which holds for $j\in\Z_p$, $n\in\Z_p^\times$, $s\in\Z$. Writing $n_i=j_ip+m_i$:

\begin{gather*}
p^{w(\bs)}H_{pr}^{(p)}(\bs)\,=\sum_{j_1\geq\ldots\geq j_k=0}^{r-1}\left(\sum_{\substack{p-1\geq m_1,\ldots,m_k\geq 1\\m_i>m_{i+1}\text{ if }j_i=j_{i+1}}}\frac{p^{w(\bs)}}{(j_1p+m_1)^{s_1}\ldots(j_kp+m_k)^{s_k}}\right)\\
=\sum_{j_1\geq\ldots\geq j_k=0}^{r-1}\left(\sum_{\substack{p-1\geq m_1,\ldots,m_k\geq 1\\m_i>m_{i+1}\text{ if }j_i=j_{i+1}}}\prod_{q=1}^{k}\sum_{i_q=0}^{\infty}{-s_q\choose i_q}j_q^{i_q}\frac{p^{s_q+i_q}}{m_q^{s_q+i_q}} \right)\\
=\sum_{\substack{0\leq j_1,\ldots,j_k\leq r-1\\0\leq i_1,\ldots,i_k}}\left(\prod_{q=1}^k{-s_q\choose i_q}j_q^{i_q}\right)\prod_{j=0}^{r-1} p^{w(\bt_j)}\H(\bt_j),\\[5mm]
\hspace{2cm}\text{where }\hspace{1cm}\bt_j:=(s_{a}+i_{a},\ldots,s_{b}+i_{b}),\gap \{a,a+1,\ldots,b\}=\{n:j_n=j\}.
\end{gather*}

The coefficients appearing are all integral, so Proposition \ref{propconv} now implies that
\[
\bigg[p^{w(\bs)}H_{pr}^{(p)}(\bs)\bigg]=\z\left(\sum_{\substack{0\leq j_1,\ldots,j_k\leq r-1\\0\leq i_1,\ldots,i_k}}\left(\prod_{q=1}^k{-s_q\choose i_q}j_q^{i_q}\right) z_{\bt_0}*\ldots*z_{\bt_{r-1}}\right).
\]
\end{proof}

Our next result concerns binomial coefficients of a certain shape.
\begin{theorem}
For all integers $k$, $r$, with $r\geq 0$, the binomial coefficient
\[
\displaystyle {kp\choose rp}
\]
is asymptotically representable.
\end{theorem}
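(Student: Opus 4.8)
The plan is to reduce $\binom{kp}{rp}$ to products of the asymptotically representable quantities $p^{w(\bs)}H_{pr}^{(p)}(\bs)$ from the previous theorem, using a $p$-adic factorization of the binomial coefficient into a ``unit part'' (a product over integers prime to $p$) and a ``$p$-part'' (an analogous binomial coefficient at the next level, $\binom{k}{r}$), together with the already-established closure of $\im(\z)$ under the ring structure. First I would write
\[
\binom{kp}{rp}=\frac{(kp)!}{(rp)!\,((k-r)p)!},
\]
and separate in each factorial the integers divisible by $p$ from those not: $(kp)!=p^{k}\,k!\,\prod_{\substack{1\le n\le kp\\p\nmid n}}n$. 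Carrying this out for all three factorials, the powers of $p$ cancel ($k=r+(k-r)$) and one obtains
\[
\binom{kp}{rp}=\binom{k}{r}\cdot\frac{\prod_{\substack{1\le n\le kp,\ p\nmid n}}n}{\prod_{\substack{1\le n\le rp,\ p\nmid n}}n\ \prod_{\substack{1\le n\le (k-r)p,\ p\nmid n}}n},
\]
valid at least for $k\ge r\ge 0$; the general integer case follows by the usual symmetries $\binom{kp}{rp}=\binom{kp}{(k-r)p}$ and sign conventions, or is trivial when the bottom exceeds the top.

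Next I would recognize the quotient of products over integers prime to $p$ as being controlled by the sums $H_{pr}^{(p)}(\bs)$. Taking a $p$-adic logarithm (or directly expanding, since each product over $\{n\le mp:p\nmid n\}$ is, after dividing by the corresponding product of a fixed residue system, a product of terms $1-j p/m$ type factors), one expresses $\log$ of the unit part as a $\Z_{(p)}$-linear combination of the $H_{pr}^{(p)}(\{a\}^j)$ for various $a$. More convenient: write the unit part directly as a convergent $p$-adic product and expand it using the binomial series $(jp+n)^{s}=\sum_i \binom{s}{i}j^i p^i n^{s-i}$ exactly as in the proof of the preceding theorem, so that the unit part becomes $\z$ of an explicit (infinite, but with integral coefficients) element of $\hh^1$ built from $*$-products of the $z_{\bt}$. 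By Proposition \ref{propconv} this element's image under $\z$ equals $[\text{unit part}]$, provided the coefficients are integral — which they are, being products of binomial coefficients $\binom{-s_q}{i_q}$ and powers of the $j_q$.

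Finally I would combine the pieces: $\binom{k}{r}$ is a fixed integer, hence $[\binom{k}{r}]\in\im(\z)$ trivially (it is $\z$ of the constant $\binom{k}{r}\cdot 1$), and $\im(\z)$ is a closed \emph{subring} of $\as$, so the product $[\binom{k}{r}]\cdot[\text{unit part}]=[\binom{kp}{rp}]$ again lies in $\im(\z)$. This is exactly the assertion that $\binom{kp}{rp}$ is asymptotically representable. The main obstacle I anticipate is bookkeeping in the second step: one must verify that the unit-part product, once all three factorials are expanded, really does collapse to a $\z$-image with \emph{integral} coefficients (so that Proposition \ref{propconv} applies without a denominator condition), and that the three nested sums over residues $m_i$ and $p$-adic digits $j_i$ organize into honest multiple harmonic sums $\H(\bt_j)$ rather than something with coincidences among indices that one has forgotten to track — the constraint ``$m_i>m_{i+1}$ if $j_i=j_{i+1}$'' from the previous proof is the delicate point, and it must be handled identically here. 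Everything else (cancellation of $p$-powers, closure of $\im(\z)$, convergence) is routine given the tools already in the paper.
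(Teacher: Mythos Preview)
Your proposal is correct and follows the same strategy as the paper: factor $\binom{kp}{rp}=\binom{k}{r}\cdot(\text{unit part})$, show the unit part lies in $\im(\z)$ via the previous theorem, and conclude using that $\im(\z)$ is a closed subring. The paper's execution of the middle step is considerably simpler than yours, however: rather than keeping three separate products and re-running the expansion of the preceding proof, it observes that the unit part is the \emph{single} product $\prod_{j=1,\,p\nmid j}^{rp-1}(1-kp/j)$, which expands immediately as $\sum_{n\ge 0}(-k)^n p^n H_{rp}^{(p)}(\{1\}^n)$ and then the previous theorem applies term-by-term---so your anticipated ``delicate point'' about the constraint $m_i>m_{i+1}$ never arises.
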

\begin{proof}
For $p$ odd, we write
\begin{gather*}
{kp\choose rp}=\frac{(kp)(kp-1)\ldots ((k-r)p+1)}{(rp)(rp-1)\ldots(1)}\\
={k\choose r}\prod_{\substack{j=1\\p\nmid j}}^{rp-1}\left(1-\frac{kp}{j}\right)={k\choose r}\sum_{n\geq 0}(-k)^n p^n H_{rp}^{(p)}(\{1\}^n).
\end{gather*}
The result now follows from the asymptotic representability of $p^nH_{rp}^{(p)}(\{1\}^n)$ and the fact that $\im(\z)$ is closed.

\end{proof}

Our final result gives an asymptotic representation of values of the $p$-adic zeta function, which we now describe. Values taken by the Riemann zeta function at negative integers are rational, and can be expressed in terms of Bernoulli numbers. More generally for any Dirichlet character $\chi$ values of the $L$-function $L(s,\chi)$ at negative integers lie in the field generated over $\Q$ by the values of $\chi$, so in particular are algebraic. These values can be expressed in terms of generalized Bernoulli numbers.

Let $p$ be an odd prime, $\Qb$ an algebraic closure of $\Q$, and $\C_p$ the completion of an agebraic closure of $\Q_p$ (the field of $p$-adic numbers).  We fix once and for all embeddings of $\overline{\Q}$ into $\C$ and into $\C_p$. This allows us to identiy algebraic elements of $\C$ as living in $\C_p$. Kummer's congruences for the generalized Bernoulli numbers imply that for any primitive Dirichlet character $\chi$, the function
\[
\lp1-\chi(p)p^{-s}\rp L(s,\chi)
\]
is $p$-adically continuous when restricted to the negative integers $s$ in a fixed residue class mod $p-1$. For $\chi\neq 1$, the $p$-adic $L$-funciton of Kubota-Leopoldt is the unique continuous function $\Z_p\to\C_p$, $s\mapsto L_p(s,\chi)$, agreeing with $(1-\chi(p)p^{-s})L(s,\chi)$ when $s$ is a negative integer congruent to $1$ mod $p-1$. For $\chi=1$, $L_p(s,\chi)$ is continuous except for a simple pole at $s=1$.

Define the Teichm\"{u}ller character $\omega:\lp\Z/(p)\rp^\times\to\Q_p^\times$ to be the unique group homomorphism such that $\omega_p(n)\equiv n\mod p$ for all $n\in\Z$ not divisible by $p$. The $p$-adic $L$-function $L_p(s,\omega_p^{1-k})$ agrees with $(1-\chi(p)p^{-s})L(s,\chi)$ for negative integers $s$ congruent to $k$ mod $p-1$. For $k\geq 2$, we define the $p$-adic zeta value
\[
\zeta_p(k):=L_p(k,\omega^{1-k})\in\Q_p.
\]
It is worth noting that $\zeta_p(k)$ is not $p$-adically continuous as a function of $k$, but comes from $p-1$ different continous functions, one defined on each residue class mod $p-1$. The vanishing of the odd Bernoulli numbers implies $\zeta_p(2k)=0$ for $k\geq 1$.

\begin{theorem}
For all integers $k\geq 2$, $p^k\zeta_p(k)$ is asymptotically representable.
\end{theorem}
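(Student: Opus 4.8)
The plan is to express $p^{k}\zeta_p(k)$ as a $p$-adically convergent limit of quantities that the two earlier theorems of Section~\ref{secim} already show to be asymptotically representable, and then to conclude using the fact that $\im(\z)$ is closed (Proposition~\ref{propclim}). The starting point is a classical expression of the Kubota--Leopoldt $p$-adic $L$-function as a limit of explicit finite sums (see the standard references on $p$-adic $L$-functions, and \cite{Ros12a} for a version close to the present setting): for $k\geq 2$ there are finite rational numbers $\Sigma_m=\Sigma_m(p)$, built from the reciprocals $\langle a\rangle^{-k}=\bigl(a\,\omega(a)^{-1}\bigr)^{-k}$ over the range $1\leq a\leq p^{m}$, $p\nmid a$ (suitably $p$-power weighted and regularized so that $p^{k}\Sigma_m\in\Z_p$ for all large $p$), with $\Sigma_m\to\zeta_p(k)$ $p$-adically as $m\to\infty$. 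The factor $p^{k}$ attached to $\zeta_p(k)$ in the statement is exactly what is needed to clear the $p$-power in the denominator of the weighting; it also reflects the fact that $\zeta_p(k)$ is a ``weight $k$'' quantity, which in the weighted framework is naturally carried by $p^{k}$.

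The next step is to rewrite each $p^{k}\Sigma_m$ in terms of the building blocks of the two preceding theorems. Expanding the Teichm\"uller factors $\omega(a)^{-k}$ $p$-adically --- using $\omega(a)\equiv a^{p^{N-1}}\pmod{p^{N}}$ to whatever precision $p^{N}$ is actually needed, together with the convergent binomial series $(1-x)^{-s}=\sum_{r\geq 0}\binom{s+r-1}{r}x^{r}$ exactly as in the Asymptotic Reversal Theorem --- and then carrying out the sum over $a$ should express $p^{k}\Sigma_m$ as a $\Q$-linear combination, with denominators whose prime part is bounded and $k$-th-power free independently of $p$ (so that Proposition~\ref{propconv} applies), of generalized weighted partial multiple harmonic sums $p^{w(\bt)}H_{p^{m}}^{(p)}(\bt)$ and of binomial coefficients of the shape $\binom{kp}{rp}$. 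By the two earlier theorems of Section~\ref{secim} each such term is asymptotically representable, and $\im(\z)$ is a subring of $\as$, hence closed under such combinations; so $\bigl[\,p^{k}\Sigma_m(p)\,\bigr]\in\im(\z)$ for every fixed $m$.

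Finally one promotes the $p$-adic convergence $\Sigma_m(p)\to\zeta_p(k)$, valid for each individual $p$, to convergence in $\as$: there must be a function $N(m)\to\infty$ with $v_p\bigl(p^{k}\Sigma_m(p)-p^{k}\zeta_p(k)\bigr)\geq N(m)$ for all $p$ sufficiently large. Granting this, for every $n\geq 1$ one has $p^{k}\Sigma_m(p)\equiv p^{k}\zeta_p(k)\pmod{p^{n}}$ for all large $p$ once $m$ is large enough, i.e.\ $\bigl[\,p^{k}\Sigma_m(p)\,\bigr]\to\bigl[\,p^{k}\zeta_p(k)\,\bigr]$ in $\as$; since every term of this sequence lies in $\im(\z)$ and $\im(\z)$ is closed by Proposition~\ref{propclim}, the limit $\bigl[\,p^{k}\zeta_p(k)\,\bigr]$ lies in $\im(\z)$, which is the claim.

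I expect the main obstacle to be precisely this last point: obtaining a rate of $p$-adic convergence of the Riemann sums that is uniform in the prime $p$, not merely valid for each $p$ separately --- in effect, an explicit $p$-uniform form of the Kummer congruences that define $\zeta_p(k)$ --- carried out in tandem with the denominator bookkeeping above so that Proposition~\ref{propconv} remains applicable throughout. An alternative would be to bypass the Riemann-sum formula and instead feed Kummer's congruences for $(1-p^{n-1})B_{n}/n$, together with the known expressions of such Bernoulli quotients modulo powers of $p$ in terms of binomial coefficients $\binom{kp}{rp}$, directly into the closedness of $\im(\z)$; the $p$-uniformity issue is unchanged, so this is only a cosmetic variant.
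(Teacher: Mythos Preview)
Your outline has a genuine gap at the second step, before you even get to the uniform-convergence issue you flag at the end. The earlier theorem on generalized weighted multiple harmonic sums shows that $p^{w(\bt)}H_{pr}^{(p)}(\bt)$ is asymptotically representable for a \emph{fixed} positive integer $r$: look at the proof, where the asymptotic representation is a sum indexed by $0\le j_1,\ldots,j_k\le r-1$ with coefficients $j_q^{i_q}$, so $r$ must be a constant independent of $p$. You, however, want to apply it with the upper limit $p^{m}$, i.e.\ with $r=p^{m-1}$, which depends on $p$. That is outside the scope of the theorem, and there is no obvious way to bootstrap from fixed $r$ to $r=p^{m-1}$ within the asymptotic framework. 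The same remark applies to the binomial-coefficient theorem: it handles $\binom{kp}{rp}$ for fixed integers $k,r$, not for parameters growing with $p$. So the ``building blocks'' you invoke are not actually available, and the acknowledged uniform-convergence problem is moot.

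The paper's argument avoids all of this by going in the opposite direction. Washington's identity expresses each depth-one weighted sum $p^{s}H_{p-1}(s)$ as a $p$-adically convergent series $\sum_{k\ge s+1}(-1)^{k+s+1}\binom{k-1}{s-1}p^{k}\zeta_p(k)$. This triangular system is then inverted \emph{exactly}, using the Bernoulli-number identity $\sum_{s=0}^{k-n}\binom{k+1-n}{s}B_s=\delta_{k,n}$, to obtain
\[
p^{n}\zeta_p(n)=\sum_{s\ge n-1}\frac{(-1)^{s+n+1}}{n-1}\binom{s-1}{n-2}B_{s+1-n}\,p^{s}H_{p-1}(s).
\]
Von Staudt--Clausen controls the denominators of the $B_{s+1-n}$, so Proposition~\ref{propconv} applies directly and yields a single explicit element of $\hh^{1}$ (a series in the depth-one generators $z_s$) whose image under $\z$ is $[\,p^{n}\zeta_p(n)\,]$. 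No limit in $\as$, no closedness of $\im(\z)$, and no sums with $p$-dependent ranges are needed.
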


\begin{proof}
 In \cite{Was98} (Theorem 1 with $n=1$), Washington shows that for $r\geq 1$, we have a convergent $p$-adic series identity for a multiple harmonic sums in terms of $p$-adic $\zeta$-values:
\begin{align*}
\H(s)&=-\sum_{k=1}^\infty {-s\choose k}p^k\zeta_p(s+k)\\
&=\sum_{k=s+1}^\infty (-1)^{k+s+1}{k-1\choose s-1}p^{k-s}\zeta_p(k).
\end{align*}
Fix an integer $n\geq 2$. Using the identity above, we can compute:
\begin{align*}
&\sum_{s\geq n-1}\frac{(-1)^{s+n+1}}{n-1}{s-1\choose n-2}B_{s+1-n}\,p^{s}\H(s)\\
=&\sum_{s\geq n-1}\frac{(-1)^{s+n+1}}{n-1}{s-1\choose n-2}B_{s+1-n}\,p^s\sum_{k=s+1}^\infty(-1)^{k+s+1}{k-1\choose s-1}p^{k-s}\zeta_p(k)\\
=&\sum_{k=n}^\infty   \frac{(-1)^{k+n}}{n-1}p^k\zeta_p(k)\sum_{s=n-1}^{k-1}{k-1\choose s-1}{s-1\choose n-2}B_{s+1-n}\\
=&\sum_{k=n}^\infty   \frac{(-1)^{k+n}}{n-1}{k-1\choose n-2}p^k\zeta_p(k)\sum_{s=n-1}^{k-1}{k+1-n\choose s+1-n}B_{s+1-n}\\
=&\sum_{k=n}^\infty   \frac{(-1)^{k+n}}{n-1}{k-1\choose n-2}p^k\zeta_p(k)\sum_{s=0}^{k-n}{k+1-n\choose s}B_{s}.\\
\end{align*}
It is known that
\[
\sum_{s=0}^{k-n}{k+1-n\choose s}B_{s}=\begin{cases}1\text{ if }k=n,\\0\text{ otherwise}\\\end{cases}
\]
(see \cite{Coh07}, p.\ 7), so we obtain
\[
\sum_{s\geq n-1}\frac{(-1)^{s+n+1}}{n-1}{s-1\choose n-2}B_{s+1-n}\,p^{s}\H(s)=p^n\zeta_p(n).
\]
The von Staudt-Clausen theorem implies $B_k$ has squarefree denominator and the factor of $n-1$ in the denominator is constant, so Proposition \ref{propconv} gives
\[
\left[p^n\zeta_p(n)\right]=\z\lp\sum_{s\geq n-1}\frac{(-1)^{s+n+1}}{n-1}{s-1\choose n-2}B_{s+1-n}z_s\rp.
\]
\end{proof}

%

\ack
I thank Jeff Lagarias for numerous helpful discussions and encouragement. This work supported in part by NSF grants  DMS-0943832 and DMS-1101373.

\bibliographystyle{hplain}
\bibliography{jrbiblio}
\end{document}